\newtheorem{theorem}{Theorem}
\newtheorem{lemma}[theorem]{Lemma}
\theoremstyle{definition}
\newtheorem{definition}{Definition}
\theoremstyle{remark}
\newcommand{\newword}[1]{\textbf{\emph{#1}}}
\newcommand{\Red}{\mathrm{Red}}
\newcommand{\KD}{\mathrm{KD}}
\newcommand{\schubert}{\ensuremath{\mathfrak{S}}}
\newcommand{\kohnert}{\ensuremath{\mathfrak{K}}}
\newcommand{\comp}[1]{\ensuremath\mathrm{\mathbf{#1}}}
\newcommand{\wt}{\comp{wt}}
\newcommand{\D}{\ensuremath\mathbb{D}}
\newcommand{\T}{\ensuremath\mathbb{T}}
\newcommand{\W}{\ensuremath\mathbb{W}}
\newlength\cellsize \setlength\cellsize{8\unitlength}
\newcommand\boxify[1]{\def\thearg{#1}\def\nothing{}%
\ifx\thearg\nothing\vrule width0pt height\cellsize depth0pt%
  \else\hbox to 0pt{\usebox2\hss}\fi%
  \vbox to \cellsize{\vss\hbox to \cellsize{\hss$_{#1}$\hss}\vss}}
\newcommand{\circify}[1]{\def\thearg{#1}\def\nothing{}%
\ifx\thearg\nothing\vrule width0pt height\cellsize depth0pt%
  \else\hbox to 0pt{\usebox3\hss}\fi%
  \vbox to \cellsize{\vss\hbox to \cellsize{\hss$_{#1}$\hss}\vss}}
\newcommand\nullify[1]{\def\thearg{#1}\def\nothing{}%
\ifx\thearg\nothing\vrule width0pt height\cellsize depth0pt%
  \else\hbox to 0pt{\hss}\fi%
  \vbox to \cellsize{\vss\hbox to \cellsize{\hss$_{#1}$\hss}\vss}}
\newcommand\tableau[1]{\vtop{\let\\=\cr
\setlength\baselineskip{-8000pt}
\setlength\lineskiplimit{8000pt}
\setlength\lineskip{0pt}
\halign{&\boxify{##}\cr#1\crcr}}}
\newcommand\cirtab[1]{\vline\vtop{\let\\=\cr
\setlength\baselineskip{-8000pt}
\setlength\lineskiplimit{8000pt}
\setlength\lineskip{0pt}
\halign{&\circify{##}\cr#1\crcr}}}
\newcommand\nulltab[1]{\vtop{\let\\=\cr
\setlength\baselineskip{-8000pt}
\setlength\lineskiplimit{8000pt}
\setlength\lineskip{0pt}
\halign{&\nullify{##}\cr#1\crcr}}}
\newcommand{\cball}{%
  \begin{tikzpicture}
    \filldraw[fill=black!25,draw=black] circle (4pt);
  \end{tikzpicture}
}
\begin{document}


\title[Kohnert's rule]{A bijective proof of Kohnert's rule \\ for Schubert polynomials}  

\author{Sami H. Assaf}
\address{University of Southern California, 3620 S. Vermont Ave., Los Angeles, CA 90089}
\email{shassaf@usc.edu}
\thanks{Work supported in part by NSF DMS-1763336.}

\subjclass[2010]{Primary: 05A05, 05A19; Secondary: 05E05, 14N10, 14N15}




\keywords{Schubert polynomials, Kohnert polynomials, reduced words}

\begin{abstract}
  Kohnert proposed the first monomial positive formula for Schubert polynomials as the generating polynomial for certain unit cell diagrams obtained from the Rothe diagram of a permutation. Billey, Jockusch and Stanley gave the first proven formula for Schubert polynomials as the generating polynomial for compatible sequences of reduced words of a permutation. In this paper, we give an explicit bijection between these two models, thereby definitively proving Kohnert's rule for Schubert polynomials.
\end{abstract}

\maketitle


Schubert polynomials, introduced by Lascoux and Sch{\"u}tzenberger in 1982 \cite{LS82}, are polynomial representatives of Schubert classes for the cohomology of the flag manifold whose structure constants precisely give the Schubert cell decomposition for the corresponding product of Schubert classes. They can be defined in terms of \newword{divided difference operators} $\partial_i$ acting on polynomials $f \in \mathbb{Z}[x_1,x_2,\ldots]$ by
\begin{equation}
  \partial_i(f) = \frac{f - s_i \cdot f}{x_i - x_{i+1}} ,
\end{equation}
where $s_i \in \mathcal{S}_{\infty}$ is the simple transposition acting by exchanging $x_i$ and $x_{i+1}$.

A \newword{reduced word} for a permutation $w$ is a sequence $\rho = (\rho_{\ell},\ldots,\rho_1)$ such that $s_{\rho_{\ell}} \cdots s_{\rho_1} = w$ has $\ell$ inversions. Let $\Red(w)$ denote the set of reduced words for $w$. One can show the $\partial_i$ satisfy the braid relations, and so for $\rho\in\Red(w)$ we define
\begin{eqnarray}
  \partial_w & = & \partial_{\rho_{\ell}} \cdots \partial_{\rho_1} .
\end{eqnarray}

\begin{definition}[\cite{LS82}]
  The \newword{Schubert polynomial} $\schubert_w$ for $w\in\mathcal{S}_n$ is
  \begin{equation}
    \schubert_w = \partial_{w^{-1} w_0} \left( x_1^{n-1} x_2^{n-2} \cdots x_{n-1} \right),
    \label{e:schubert}
  \end{equation}
  where $w_0 = n \cdots 2 1$ is the longest permutation of $\mathcal{S}_n$ of length $\binom{n}{2}$.
  \label{def:schubert}
\end{definition}

Surprisingly, at least from this definition, Schubert polynomials are monomial positive, and so one naturally seeks a manifestly positive combinatorial expression in terms of monomials. Kohnert, a student of Lascoux, proposed the first such formula in 1990 using combinatorics of \newword{diagrams}, finite collections of points, called cells, in the first quadrant of $\mathbb{Z}\times\mathbb{Z}$. Kohnert's elegant combinatorial model for Demazure characters \cite{Koh91} uses the following operation.

\begin{definition}[\cite{Koh91}]
  A \newword{Kohnert move} on a diagram selects the rightmost cell of a given row and moves the cell down within its column to the first available position below, if it exists, jumping over other cells in its way as needed. 
  \label{def:kohnert_move}
\end{definition}

Let $\KD(w)$ denote the set of \newword{Kohnert diagrams for $w$}, defined as those diagrams obtainable from the \newword{Rothe diagram} $\D(w)$ via Kohnert moves, where
\begin{equation}
  \D(w) = \{ (i,w_j) \mid i<j \mbox{ and } w_i > w_j \} .
  \label{e:rothe}
\end{equation}
For example, Fig.~\ref{fig:KMs} shows the Rothe diagram for $w=152869347$ on the left, and all other diagrams are obtained from Kohnert moves on the shaded cells.

\begin{figure}[ht]
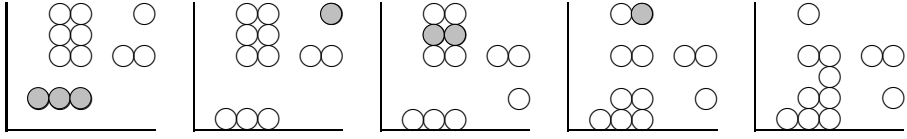

  \begin{displaymath}
    \arraycolsep=0.9\cellsize
    \begin{array}{ccccc}
      \cirtab{ 
        & & \ & \ & & & \ \\
        & & \ & \ \\
        & & \ & \ & & \ & \ \\
        & & \\
        & \cball & \cball & \cball \\ & \\\hline } &
      \cirtab{ 
        & & \ & \ & & & \cball \\
        & & \ & \ \\
        & & \ & \ & & \ & \ \\
        & & \\
        \\
        & \ & \ & \ & \\\hline } &
      \cirtab{ 
        & & \ & \ & & & \\
        & & \cball & \cball \\
        & & \ & \ & & \ & \ \\
        & & \\
        & & & & & & \ \\
        & \ & \ & \  \\\hline } &
      \cirtab{ 
        & & \ & \cball & & & \\
        & & \\
        & & \ & \ & & \ & \ \\
        \\
        & & \ & \ & & & \ \\
        & \ & \ & \ &  \\\hline } &
      \cirtab{ 
        & & \ & & & & \\
        & & \\
        & & \ & \ & & \ & \ \\
        & & & \ \\
        & & \ & \ & & & \ \\
        & \ & \ & \ \\\hline }
    \end{array}
  \end{displaymath}
  \caption{\label{fig:KMs}Several Kohnert diagrams for $152869347$.}
\end{figure}

\begin{definition}
  The \newword{Kohnert polynomial} for a permutation $w$ is 
  \begin{equation}
    \kohnert_{w} = \sum_{T \in \KD(w)} x_1^{\wt(T)_1} \cdots x_n^{\wt(T)_n}.
    \label{e:kohnert}
  \end{equation}
  where the \newword{weight of $T$} satisfies $\wt(T)_i$ equals the number of cells of $T$ in row $i$.
  \label{def:kohnert_poly}
\end{definition}

Kohnert asserted $\schubert_w = \kohnert_w$, giving the first conjectured combinatorial formula for Schubert polynomials. Winkel \cite{Win99,Win02} gives two proofs, the second an improvement on the first after it faced broad criticism, though neither the original nor revised proof is widely accepted given the intricate and opaque arguments. 

Billey, Jockusch, and Stanley \cite{BJS93} gave the first proven combinatorial formula for Schubert polynomials in terms of compatible sequences for reduced words. For $\rho \in \Red(w)$, a word $\alpha$ is \newword{$\rho$-compatible} if $\alpha_j \leq \rho_j$, $\alpha_{j+1} \geq \alpha_j$, and $\alpha_{j+1} > \alpha_{j}$ whenever $\rho_{j+1} > \rho_{j}$. In this case, we call $(\rho,\alpha)$ a \newword{compatible pair}.

\begin{figure}[ht]
  \begin{displaymath}
    \arraycolsep=2pt
    \begin{array}{ccccccccccccccccccccccc}
      6 & & 4 & & 5 & & 7 & & 8 & & 6 & & 4 & & 5 & & 7 & & 2 & & 3 & & 4 \\
      \rotatebox[origin=c]{90}{$\leq$} & &
      \rotatebox[origin=c]{90}{$\leq$} & &
      \rotatebox[origin=c]{90}{$\leq$} & &
      \rotatebox[origin=c]{90}{$\leq$} & &
      \rotatebox[origin=c]{90}{$\leq$} & &
      \rotatebox[origin=c]{90}{$\leq$} & &
      \rotatebox[origin=c]{90}{$\leq$} & &
      \rotatebox[origin=c]{90}{$\leq$} & &
      \rotatebox[origin=c]{90}{$\leq$} & &
      \rotatebox[origin=c]{90}{$\leq$} & &
      \rotatebox[origin=c]{90}{$\leq$} & &
      \rotatebox[origin=c]{90}{$\leq$} \\
      \alpha_{12} & > &
      \alpha_{11} & \geq &
      \alpha_{10} & \geq &
      \alpha_{9}  & \geq &
      \alpha_{8}  & > &
      \alpha_{7}  & > &
      \alpha_{6}  & \geq &
      \alpha_{5}  & \geq &
      \alpha_{4}  & > &
      \alpha_{3}  & \geq &
      \alpha_{2}  & \geq &
      \alpha_{1} 
    \end{array}
  \end{displaymath}
  \caption{\label{fig:comp}Equations for $\rho$-compatible words.}
\end{figure}

For example, Fig.~\ref{fig:comp} shows the necessary inequalities for a compatible pair for $\rho=(6,4,5,7,8,6,4,5,7,2,3,4)$, giving two solutions $(6,4,4,4,4,3,2,2,2,1,1,1)$ and $(5,4,4,4,4,3,2,2,2,1,1,1)$. By \cite[Theorem~1.1]{BJS93}, we have the following.

\begin{theorem}[\cite{BJS93}]
  The Schubert polynomial $\schubert_w$ is given by
  \begin{equation}
    \schubert_w = \sum_{\substack{\rho \in R(w) \\ \alpha \ \rho-\mathrm{compatible}}} x_{\alpha_1} \cdots x_{\alpha_{\ell}} . 
    \label{e:BJS}
  \end{equation}
  \label{thm:BJS}
\end{theorem}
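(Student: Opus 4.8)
The plan is to prove Theorem~\ref{thm:BJS} by repackaging the right-hand side of \eqref{e:BJS} as a single closed-form element of the nil-Coxeter algebra and then matching that element, through divided differences, against Definition~\ref{def:schubert}. Write $\mathfrak{B}_w$ for the right-hand side of \eqref{e:BJS}, and $[u_v]X$ for the coefficient of $u_v$ in an element $X$. The first step is to reorganize the sum: the compatibility conditions force the index set $\{1,\dots,\ell\}$ of a compatible pair $(\rho,\alpha)$ to break into consecutive blocks on which $\alpha$ is constant, with $\alpha$ strictly increasing from one block to the next; on a block with value $v$ one has $\alpha_j\le\rho_j$, and since $\alpha$ is constant there and $\rho$ is reduced, $\rho$ is strictly monotone on the block with all entries at least $v$. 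Conversely such data glue back into a compatible pair. Hence, in the nil-Coxeter algebra $\mathcal{N}_n$ generated by $u_1,\dots,u_{n-1}$ subject to $u_i^2=0$ and the braid relations, writing $u_w=u_{\rho_\ell}\cdots u_{\rho_1}$ for any $\rho\in\Red(w)$ (well-defined since the $u_i$ satisfy the same braid relations as the $s_i$),
\[
  \sum_{w\in\mathcal{S}_n}\mathfrak{B}_w\,u_w \;=\; \mathfrak{G}_n \;:=\; g_{n-1}(x_{n-1})\,g_{n-2}(x_{n-2})\cdots g_1(x_1),\qquad g_k(x)=\prod_{j}(1+x\,u_j),
\]
the inner product running over $j\in\{k,\dots,n-1\}$ in the monotone order prescribed by the block structure, with $g_k(x_k)$ the generating element of exactly those reduced factors contributing $x_k$ to the weight; the precise orientation of the two products is fixed once the permutation-multiplication convention implicit in Definition~\ref{def:schubert} is pinned down, which I would settle at the outset.

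The second step is to identify $\mathfrak{G}_n$ with $\sum_{w\in\mathcal{S}_n}\schubert_w\,u_w$, which splits into two claims. First, the coefficient of $u_{w_0}$ in $\mathfrak{G}_n$ is the staircase monomial $x_1^{n-1}x_2^{n-2}\cdots x_{n-1}$: any admissible factorization of $w_0$ must saturate the bound $\ell\le n-k$ on the length of the factor drawn from $g_k$, which forces the unique staircase factorization, of the asserted weight. Second, $\partial_i\,\mathfrak{G}_n=u_i\,\mathfrak{G}_n$ for each $i\in\{1,\dots,n-1\}$ (with orientations matching those above). Granting these, comparing the $u_w$-coefficients of $\partial_i\,\mathfrak{G}_n=u_i\,\mathfrak{G}_n$ shows that $v\mapsto[u_v]\mathfrak{G}_n$ obeys the one-step divided-difference recursion characterizing Schubert polynomials, and since it agrees with $w\mapsto\schubert_w$ at $w_0$, iterating $\partial_i$ downward from $u_{w_0}$ gives $[u_w]\mathfrak{G}_n=\partial_{w^{-1}w_0}\bigl(x_1^{n-1}\cdots x_{n-1}\bigr)=\schubert_w$, which together with the first step proves $\schubert_w=\mathfrak{B}_w$.

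The third step, proving $\partial_i\,\mathfrak{G}_n=u_i\,\mathfrak{G}_n$, is where the real work lies; this is the nil-Coxeter heart of the Fomin--Stanley treatment of Schubert polynomials, and the step I expect to be the main obstacle. Since $g_k(x_k)$ involves only the variable $x_k$, it is fixed by $s_i$ for every $k\notin\{i,i+1\}$, and $u_i$ commutes with $u_j$ for every $j\ge i+2$; repeated use of the twisted Leibniz rule $\partial_i(fg)=(\partial_i f)\,g+(s_i\cdot f)(\partial_i g)$ then lets the factors $g_k$ with $k>i+1$ and with $k<i$ pass through, to the left and to the right respectively, reducing the claim to the local identity
\[
  \partial_i\bigl(g_{i+1}(x_{i+1})\,g_i(x_i)\bigr)=u_i\,g_{i+1}(x_{i+1})\,g_i(x_i).
\]
One would verify this by expanding the two finite products, collecting terms according to their nil-Coxeter monomial, and matching coefficients using $u_i^2=0$, the braid relation $u_iu_{i+1}u_i=u_{i+1}u_iu_{i+1}$, and the elementary behavior of $\partial_i$ on monomials $x_i^ax_{i+1}^b$; disentangling how the divided difference interacts with the two overlapping runs of generators in $g_{i+1}(x_{i+1})$ and $g_i(x_i)$ is the delicate point, and I would organize it by an induction on $n-i$. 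As an alternative that avoids the algebra, one could instead check that $\mathfrak{B}_w$ satisfies the Lascoux--Sch{\"u}tzenberger transition recursion $\schubert_w=x_r\schubert_v+\sum_q\schubert_{v\,t_{qr}}$, with $t_{qr}$ the transposition of positions $q$ and $r$, by constructing an explicit weight-preserving bijection between the compatible pairs for $w$ and the disjoint union of those compatible pairs for $v$ that use the value $r$ together with the compatible pairs for the permutations $v\,t_{qr}$; that route is combinatorially heavier but is closer in spirit to the bijective methods of this paper.
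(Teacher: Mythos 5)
The paper does not actually prove Theorem~\ref{thm:BJS}: it is imported wholesale from \cite{BJS93} ("By [Theorem~1.1], we have the following"), so there is no internal argument to compare against. What you have written is a reconstruction of the Fomin--Stanley nil-Coxeter proof of the Billey--Jockusch--Stanley formula, and its architecture is sound. The block decomposition of compatible pairs is correct (constancy of $\alpha$ on a block forces the corresponding letters of $\rho$ to be distinct, hence strictly monotone, and all at least the block value), so $\sum_w \mathfrak{B}_w u_w$ does equal the product $g_{n-1}(x_{n-1})\cdots g_1(x_1)$ once the orders are fixed, with the nil-Coxeter relations automatically discarding the non-reduced choices; the degree count at $u_{w_0}$ does isolate the staircase term; and the reduction of $\partial_i\mathfrak{G}_n=u_i\mathfrak{G}_n$ to the two-factor identity is legitimate, since the outer factors are $s_i$-invariant as polynomials and the block of factors $g_k$ with $k\ge i+2$ commutes with $u_i$ (the right-hand block need not, and as you note it does not have to).

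Two caveats, one of which is a real gap. First, the orientation issues you defer are not cosmetic: with the paper's convention $s_{\rho_\ell}\cdots s_{\rho_1}=w$ and $\alpha_j$ increasing in $j$, one must check that left multiplication by $u_i$ yields precisely the recursion $\partial_i\schubert_w=\schubert_{ws_i}$ implicit in Definition~\ref{def:schubert} rather than its left-handed variant; this does work out (and is easily confirmed for $n=3$), but it is exactly the kind of detail that silently breaks such arguments. Second, and substantively, the identity $\partial_i\bigl(g_{i+1}(x_{i+1})g_i(x_i)\bigr)=u_i\,g_{i+1}(x_{i+1})g_i(x_i)$ carries the entire content of the theorem, and you only describe a plan for it; as written the proposal is an outline, not a proof. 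The identity is true --- it is the key lemma of Fomin and Stanley --- but direct expansion with an induction on $n-i$ is the hard way in: the standard route first establishes the commutation $g_k(x)g_k(y)=g_k(y)g_k(x)$ and derives from it that $\mathfrak{G}_n$ transforms under $x_i\leftrightarrow x_{i+1}$ by multiplication by $1+(x_{i+1}-x_i)u_i$, which is equivalent to the divided-difference identity. The transition-formula alternative you mention is not a shortcut, since the transition recursion itself requires proof of comparable depth.
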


We prove \eqref{e:kohnert} and \eqref{e:BJS} agree term by term by giving weight-preserving bijections
\[
    \left\{ (\rho,\alpha) \mid \rho\in\Red(w), \ \alpha \ \rho-\mathrm{compatible} \right\}
    \begin{array}{c}
      \stackrel{\D}{\longrightarrow} \\[-1ex]
      \stackrel{\displaystyle\longleftarrow}{\scriptstyle\W}
    \end{array}
    \KD(w) .
\]
These recursively defined maps are easily implemented, proving the following.

\begin{theorem}[Kohnert's rule]
  Given a permutation $w$, we have
  \begin{equation}
    \kohnert_w = \schubert_w .
  \end{equation}
\end{theorem}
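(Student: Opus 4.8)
The plan is to deduce $\kohnert_w=\schubert_w$ from a weight-preserving bijection between the two models and to build that bijection recursively. By Theorem~\ref{thm:BJS}, $\schubert_w$ is the generating polynomial of the compatible pairs $(\rho,\alpha)$ for $w$, weighted by $x_{\alpha_1}\cdots x_{\alpha_\ell}$, and by Definition~\ref{def:kohnert_poly} the Kohnert polynomial $\kohnert_w$ is the generating polynomial of $\KD(w)$, weighted by $x_1^{\wt(T)_1}\cdots x_n^{\wt(T)_n}$. The exponent of $x_i$ in $x_{\alpha_1}\cdots x_{\alpha_\ell}$ is the number of entries of $\alpha$ equal to $i$, and in the Kohnert monomial it is the number of cells of $T$ in row $i$; hence it suffices to produce mutually inverse maps
\[
  \D\colon \{(\rho,\alpha)\mid \rho\in\Red(w),\ \alpha\ \rho\text{-compatible}\}\longrightarrow \KD(w),\qquad \W=\D^{-1},
\]
matching "number of entries of $\alpha$ equal to $i$" with "number of cells of $T$ in row $i$" for each $i$. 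The bijection is anchored at the Rothe diagram $\D(w)\in\KD(w)$, reached by performing no Kohnert moves, which should correspond to the compatible pair realizing the leading monomial $\prod_i x_i^{\wt(\D(w))_i}$ of $\schubert_w$; everything else is generated from these anchors by a single reduction step.

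I would define $\D$ and $\W$ from that step and induct on $\ell=\ell(w)$, equivalently on the number of cells. Given a compatible pair $(\rho,\alpha)$ with $\rho=(\rho_\ell,\dots,\rho_1)$ and $\alpha=(\alpha_\ell,\dots,\alpha_1)$, let $\alpha_1=\cdots=\alpha_k=m$ be the terminal block of $\alpha$ equal to its minimum $m=\alpha_1$. Compatibility forces $\rho_1>\rho_2>\dots>\rho_k\ge m$ there, so $c=s_{\rho_k}\cdots s_{\rho_1}$ has length $k$, and deleting this block exhibits $w=w'c$ as a length-additive factorization, where $w'$ is the product of the remaining letters; its reduced word $(\rho_\ell,\dots,\rho_{k+1})$ together with $(\alpha_\ell,\dots,\alpha_{k+1})$ is a compatible pair $(\rho',\alpha')$ for $w'$, to which induction applies, yielding $\D(\rho',\alpha')\in\KD(w')$. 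The step then recreates row $m$: the decreasing sequence $\rho_1>\dots>\rho_k$ together with $m$ prescribes a list of $k$ cells to insert in row $m$, and one shows that adjoining these cells to $\D(\rho',\alpha')$ produces a diagram in $\KD(w)$ — indeed, one reached from $\D(w)$ by running the associated Kohnert moves in reverse. Conversely, $\W$ reads off from the lowest occupied row of a diagram $T\in\KD(w)$ — from the columns of its cells, compared against where those columns lie for the smaller permutation — the value $m$, its multiplicity $k$, and the decreasing word $\rho_1>\dots>\rho_k$, then recurses on $T$ with row $m$ deleted. The base case $\ell=0$ is trivial, weight-preservation is built into the step (it changes the number of $\alpha$-entries equal to $m$ and the number of row-$m$ cells both by $k$), and $\D\circ\W=\mathrm{id}$, $\W\circ\D=\mathrm{id}$ follow by induction once the step is shown to be bijective.

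The crux, and the step I expect to be the main obstacle, is the well-definedness of this reduction on both sides. For $\D$ one must check that the prescribed cells really can be placed in row $m$ by genuine Kohnert moves on $\D(w)$: at each placement the moved cell has to be the rightmost in its row and has to land exactly at the intended "first available position below", taking into account all previously moved cells. This amounts to controlling how the cells of $\D(w)$ are distributed among columns and how that distribution relates to the Rothe diagram of $w'$ — and the subtlety is precisely that right multiplication by $c$ does not act on Rothe diagrams cell by cell. For $\W$ the dual obstacle is to pin down enough of the structure of $\KD(w)$ — an invariant preserved by every Kohnert move, hence valid on all of $\KD(w)$ — to guarantee that the lowest occupied row of an arbitrary Kohnert diagram has exactly the shape the word recursion requires, so that $m$, $k$, and $\rho_1>\dots>\rho_k$ are recovered without ambiguity. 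Once this structural invariant is established, the remaining verifications — the base case, weight-preservation, and mutual inverseness — are routine, and Theorem~\ref{thm:BJS} delivers $\kohnert_w=\schubert_w$.
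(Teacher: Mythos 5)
There is a genuine gap. Your proposal is a correct description of the \emph{architecture} of the proof --- reduce to a weight-preserving bijection between compatible pairs and $\KD(w)$ via Theorem~\ref{thm:BJS}, define the two maps by a recursion on $\ell(w)$, observe that weight-preservation and mutual inverseness are then formal --- but you explicitly defer the well-definedness of the recursive step on both sides, and that is where essentially all of the mathematical content lives. Moreover, the step as you literally describe it (delete the terminal constant block of $\alpha$, recurse, then ``insert $k$ cells in row $m$ at the columns $\rho_1>\dots>\rho_k$'') is not the right map: the recursion produces a Kohnert diagram for $\D(w')$, whose columns are a nontrivial permutation of those of $\D(w)$, so one cannot simply adjoin cells at the columns read off from $\rho$. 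The paper's recursion peels one letter at a time and, before adjoining the new cell at $(\alpha_1,\rho_1)$, applies a column transposition $\mathfrak{s}_{\rho_1}$ to the recursively built diagram, using the fact (Lemma~\ref{lem:vexillary}) that for a southwest diagram with adjacent columns nested, swapping columns $c,c+1$ induces a weight-preserving bijection on Kohnert diagrams. This operator is exactly the device that reconciles ``right multiplication by $s_c$ does not act on Rothe diagrams cell by cell'' --- the obstacle you correctly name but do not overcome.

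Concretely, three ingredients are missing. First, Lemma~\ref{lem:vexillary} and the resulting operator $\mathfrak{s}_c$ on Kohnert diagrams, without which the recursive output cannot be transported from $\KD(ws_{\rho_1})$ to a subdiagram situation inside $\KD(w)$. Second, for the forward map one must know that the letter $\rho_1$ corresponds, under the matching of $\rho$ to the super-Yamanouchi word $\pi_w$ (Definition~\ref{def:match}), to a cell at the end of its row in $\D(w)$ with no lower row ending in that column; this is Lemma~\ref{lem:match}, proved by induction on Coxeter moves, and it is what licenses the application of Lemma~\ref{lem:superY} relating $\D(w)$ and $\D(ws_c)$ by ``delete a cell and swap two columns.'' Third, for the inverse map your ``structural invariant preserved by every Kohnert move'' must actually be produced: in the paper it is the column-count inequality \eqref{e:2.2}, which forces the lowest-then-rightmost cell of any $T\in\KD(w)$ to lie in a column where some row of $\D(w)$ has its rightmost cell (Lemma~\ref{lem:end}), so that the recursion for $\W$ is legal. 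Your outline anticipates the need for each of these but supplies none of them, so as written the argument does not go through.
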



As motivation, consider the canonical word associated with $\D(w)$ \cite[Def.~2.3]{Ass19-inv}.

\begin{definition}[\cite{Ass19-inv}]
  A word $\pi$ is \newword{super-Yamanouchi} if factors into increasing words $(\pi^{(k)} | \cdots | \pi^{(1)})$ with each $\pi^{(i)}$ an interval and $\min(\pi^{(i+1)}) > \min(\pi^{(i)})$.
  \label{def:superY}
\end{definition}

By \cite[Prop.~2.4]{Ass19-inv}, each permutation $w$ has a unique super-Yamanouchi reduced word which we denote by $\pi_w$. By \cite[Prop.~3.2]{Ass19-inv}, $\pi_w$ is obtained by filling cells in row $r$ of $\D(w)$ with entries $r,r+1,\ldots$ from left to right. By \cite[Prop.~3.3]{Ass19-inv}, column $c$ of $\D(w)$ so filled has entries $c,c+1,\ldots$ from bottom to top.

For example, the super-Yamanouchi reduced word for the permutation $152869347$ is $\pi_w = (6,7,8\mid 5,6\mid 4,5,6,7\mid 2,3,4)$. Fig.~\ref{fig:superY} shows $\D(w)$ labeled by $\pi_w$. Notice the rows of letters of $\pi_w$ correspond to values of the maximal compatible sequence.

\begin{figure}[ht]
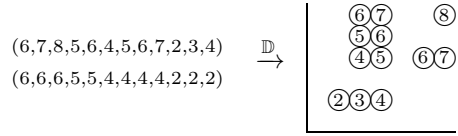

  \begin{displaymath}
    \arraycolsep=0.5\cellsize
    \begin{array}{ccc}
      \raisebox{-2\cellsize}{$
      \begin{array}{c}
        \scriptstyle (6,7,8,5,6,4,5,6,7,2,3,4) \\
        \scriptstyle (6,6,6,5,5,4,4,4,4,2,2,2)
      \end{array}$} &
      \raisebox{-2\cellsize}{$\xrightarrow{\D}$} &
      \cirtab{ 
        & & 6 & 7 & & & 8 \\
        & & 5 & 6 \\
        & & 4 & 5 & & 6 & 7 \\
        & & \\
        & 2 & 3 & 4 \\ & \\\hline }
    \end{array}
  \end{displaymath}
  \caption{\label{fig:superY}The super-Yamanouchi word and Rothe diagram for $152869347$.}
\end{figure}

We use super-Yamanouchi words to relate Rothe diagrams for permutations related in Bruhat order, thus motivating the action of permutations on diagrams. 

\begin{lemma}
  Let $\pi = \pi_w$. If $i=1$ or $(\pi_w)_i > (\pi_w)_{i-1}$, then $\widehat{\pi} = \pi_{\ell}\cdots\pi_{i+1}\pi_{i-1}\cdots\pi_1$ is super-Yamanouchi. Moreover, if $\pi_i$ lies in column $c$ of $\D(w)$, then no lower row of $\D(w)$ has its rightmost cell in column $c$ if and only if $\widehat{\pi} = \pi_{\widehat{w}}$ where $\widehat{w} = w s_{c}$, and in this case $\D(\widehat{w})$ is $\D(w) \setminus \{(\pi_w)_i\}$ with columns $c,c+1$ permuted.
  \label{lem:superY}
\end{lemma}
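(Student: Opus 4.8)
The plan is to dissect the statement into three interlocking claims and verify each using the explicit row/column description of $\pi_w$ recalled above (fill row $r$ of $\D(w)$ with $r, r+1, \ldots$ left to right; column $c$ then reads $c, c+1, \ldots$ bottom to top). First I would establish that $\widehat\pi$ is super-Yamanouchi. The hypothesis $i=1$ or $(\pi_w)_i > (\pi_w)_{i-1}$ means that, in the factorization $\pi_w = (\pi^{(k)} \mid \cdots \mid \pi^{(1)})$ into increasing intervals, the letter $\pi_i$ is the \emph{first} letter of some block $\pi^{(j)}$; equivalently, by the column description, $\pi_i = c$ sits in the bottom occupied cell of column $c$ and $c$ is the smallest value in its block. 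Deleting this letter either removes an entire singleton block (when $\pi^{(j)} = (c)$) or shortens $\pi^{(j)}$ from the left to $(c+1, \ldots)$; in the latter case the new minimum of that block is $c+1$, which still exceeds $\min(\pi^{(j-1)})$ since $\min(\pi^{(j-1)}) < \min(\pi^{(j)}) = c$ forces $\min(\pi^{(j-1)}) \le c-1$, hence $\le c < c+1$. Either way the interval-and-increasing-minima conditions of Definition~\ref{def:superY} survive, so $\widehat\pi$ is super-Yamanouchi.

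Next I would identify when $\widehat\pi$ is the super-Yamanouchi word of $\widehat w = w s_c$. Since $\widehat\pi$ differs from $\pi_w$ only by deleting one letter, and $\pi_w$ represents $w$, the word $\widehat\pi$ represents the permutation obtained from $w$ by multiplying by the appropriate reflection; the point is to check it is the \emph{simple} transposition $s_c$ and that $\widehat\pi$ is reduced for it. Here the condition ``no lower row of $\D(w)$ has its rightmost cell in column $c$'' is exactly what is needed: it guarantees that column $c$ of $\D(w)$, above and at the cell containing $\pi_i = c$, is ``clean'' in the sense that removing that cell and swapping columns $c, c+1$ again yields a Rothe diagram (of $\widehat w$). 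I would make this precise by translating the rightmost-cell condition into a statement about the one-line notation of $w$ near positions $c, c+1$: the absence of a lower rightmost cell in column $c$ means $w^{-1}(c)$ and $w^{-1}(c+1)$ are adjacent in the relevant sense so that $w s_c$ has exactly one fewer inversion, with the lost inversion being precisely the cell $(\text{row of } \pi_i, c)$. Conversely, if some lower row does have its rightmost cell in column $c$, the swap of columns $c, c+1$ on $\D(w) \setminus \{\pi_i\}$ fails to be a valid Rothe diagram (a cell would be misplaced), so $\widehat\pi$ cannot be $\pi_{\widehat w}$; I would verify this direction by exhibiting the obstruction directly on diagrams.

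Finally, under that same hypothesis I would confirm the description of $\D(\widehat w)$ as $\D(w) \setminus \{\pi_i\}$ with columns $c$ and $c+1$ interchanged. This is essentially bookkeeping with \eqref{e:rothe}: multiplying $w$ on the right by $s_c$ swaps the values $c$ and $c+1$ in one-line notation, which permutes columns $c$ and $c+1$ of the Rothe diagram and, because exactly one inversion is destroyed, deletes exactly one cell — and the super-Yamanouchi labelling pins that cell down to be the one holding $\pi_i$. I expect the main obstacle to be the second claim: cleanly characterizing, purely in terms of the ``rightmost cell in a lower row'' condition, precisely when right-multiplication by $s_c$ drops the inversion count by one and leaves a Rothe diagram of the stated shape. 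The forward direction is a direct computation, but the ``if and only if'' demands care — I would handle the converse by a careful case analysis on the position of the offending lower rightmost cell relative to column $c$, showing it creates either an extra inversion or a non-diagram after the column swap.
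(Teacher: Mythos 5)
Your opening identification of where $\pi_i$ sits is backwards, and the rest of the argument is built on it. Since the word is indexed $\pi=(\pi_\ell,\ldots,\pi_1)$ with index \emph{decreasing} left to right, inside a displayed block $\pi^{(j)}$ (an increasing run) the values \emph{decrease} as the index $i$ increases; hence the hypothesis ``$i=1$ or $\pi_i>\pi_{i-1}$'' selects the \emph{largest} letter of its block, i.e.\ the \emph{rightmost} cell of its row of $\D(w)$ --- not the smallest letter sitting at the bottom of column $c$ with value $c$. Concretely, for $w=152869347$ the letter $\pi_{10}=8$ satisfies $\pi_{10}>\pi_9$; it occupies row $6$, column $7$, is not the bottom cell of column $7$ (row $3$ also meets column $7$), and its value $8$ is not the column index. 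Your reading would also make the condition ``no lower row of $\D(w)$ has its rightmost cell in column $c$'' vacuous (there would be nothing below $\pi_i$ in column $c$), collapsing the ``if and only if.'' Consequently your proof of the first claim analyzes the wrong deletion: you delete the minimum of a block and track how the block's minimum increases, whereas the actual deletion removes the maximum of a block, leaving every block minimum unchanged; the case needing care is instead what happens when $1<i<\ell$ and the block becomes empty or is merely shortened, which the paper handles by splitting on whether $\pi_{i+1}$ lies in the same block as $\pi_i$.

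For the second and third claims your text is a plan rather than a proof: the assertions that the rightmost-cell condition makes ``$w^{-1}(c)$ and $w^{-1}(c+1)$ adjacent in the relevant sense,'' and that its failure produces a non-Rothe diagram after the column swap, are exactly the content to be proved and are left unestablished. The paper's mechanism is different and worth noting: it pushes the letter $\pi_i$ rightward through the word by Coxeter moves, commuting past shorter rows and braiding through each lower row that meets column $c$ (each braid decrements the moving letter by one, and by the column-labelling of $\D(w)$ it exits as the letter $c$), giving $\pi\equiv\widehat{\pi}\,c$ and hence $\widehat{\pi}\in\Red(ws_c)$; uniqueness of the super-Yamanouchi word then forces $\widehat{\pi}=\pi_{ws_c}$. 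The rightmost-cell hypothesis is precisely what prevents the moving letter from getting stuck against a lower row whose largest label is one less than the letter's current value (no commutation, no braid available). If you prefer your strong-exchange route --- computing the reflection $t$ with $\widehat{\pi}\in\Red(wt)$ and showing $t=s_c$ exactly under the stated condition --- that could work, but the computation of $t$ and the converse direction must actually be carried out; as written neither is.
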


\begin{proof}
  If $i=1$, then $\widehat{\pi}$ must still factor into intervals, with the last now one letter shorter (possibly deleted), but the smallest of each interval is unchanged, and so $\widehat{\pi}$ is super-Yamanouchi. If $i=\ell$, then $\widehat{\pi}$ again factors into intervals, where now we have deleted the leftmost, so  $\widehat{\pi}$ remains super-Yamanouchi. If $\ell>i>1$, then either  $\pi_{i+1} = \pi_i + 1$, and so we have removed an interval, or $\pi_{i+1}=\pi_i - 1$ lies in the same interval as $\pi_i$, so the smallest letter in that interval (and hence $\pi_{i+1}$ as well) must be larger than $\pi_{i-1}$, thus ensuring the super-Yamanouchi criteria are met.
  
  Since $\pi_i > \pi_{i-1}$ (or $i=1$), $\pi_i$ must lie at the end of its row, say $r$, in $\D(w)$. Since rows and columns of $\D(w)$ are intervals, if the nearest occupied row below $r$ is shorter than row $r$, then all letters in that row are less than $\pi_i-1$, and so $\pi_i$ commutes with them. If the row is strictly longer, then it contains $\pi_{i-1}$ followed by $\pi_i$, and so $\pi_i$ may braid through this row and become $\pi_{i}-1$, at which point it commutes with any remaining letters in the row, all of which are at least $\pi_{i}+1$. The same logic now continues down, until at last we have transformed $\pi$ into the word $\widehat{\pi}c$, where $c$ is the last value through which $\pi_i$ braided, which by \cite[Prop.~3.3]{Ass19-inv} is the column index in which it sits. Thus $\widehat{\pi} = \pi_{w s_{c}}$.

  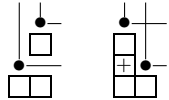
\begin{figure}[ht]
    \begin{center}
      \begin{tikzpicture}[every node/.style={inner sep=0pt},xscale=0.28,yscale=0.28]
        \node at (1,1)  {$\boxify{\ }$};
        \node at (2,1)  {$\boxify{\ }$};
        \node at (2,3)  {$\boxify{\ }$};
        \node at (1,2) (AL) {$\bullet$};
        \node at (2,4) (AR) {$\bullet$};
        \draw  (AL) -- (1,5) ;
        \draw  (AL) -- (3,2) ;
        \draw  (AR) -- (2,5) ;
        \draw  (AR) -- (3,4) ;
        \node at (6,1)  {$\boxify{\ }$};
        \node at (7,1)  {$\boxify{\ }$};
        \node at (6,3)  {$\boxify{\ }$};
        \node at (6,2)  {$\boxify{+}$};
        \node at (6,4) (BL) {$\bullet$};
        \node at (7,2) (BR) {$\bullet$};
        \draw  (BL) -- (6,5) ;
        \draw  (BL) -- (8,4) ;
        \draw  (BR) -- (7,5) ;      
        \draw  (BR) -- (8,2) ;      
      \end{tikzpicture}
      \caption{\label{fig:Rothe}Going up in Bruhat order on Rothe diagrams.}
    \end{center}
  \end{figure}

  Since $\ell(w s_{c}) = \ell(w)-1$, we have $w_c > w_{c+1}$. In the Rothe diagrams, cells in rows $r < w_{c+1}$ are in either both or neither columns $c,c+1$, and no cells lie in either column weakly above row $w_{c}$. In rows $w_{c+1} < r < w_c$, $\D(w s_{c})$ has no cells in column $c$ while in $\D(w)$ has no cells in column $c+1$, and any cells in this range move between the two columns with $\D(w)$ having the additional cell in row $w_{c+1}$, column $c$ corresponding to the inversion $w_c > w_{c+1}$; see Fig.~\ref{fig:Rothe}.
\end{proof}

Lemma~\ref{lem:superY} relates the Rothe diagrams for $w$ and $w s_c$. To relate their Kohnert diagrams, we use \cite[Lemma~6.3.5]{Ass-KC} concerning \newword{southwest} diagrams, those such that for any two cells positioned with one strictly northwest of the other there exists a cell at their southwest corner. Rothe diagrams are easily seen to be southwest and, moreover, every pair of adjacent columns of $\D(w)$ is ordered by inclusion.

\begin{lemma}[\cite{Ass-KC}]
  Let $D$ be a southwest diagram, and let $c$ be a column index for which columns $c,c+1$ of $D$ are ordered by inclusion. Then $\mathfrak{s}_c D$, the diagram obtained from $D$ by permuting columns $c,c+1$, is southwest and there is a weight-preserving bijection between Kohnert diagrams for $D$ and for $\mathfrak{s}_c D$.
  \label{lem:vexillary}
\end{lemma}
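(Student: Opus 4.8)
The plan is to prove the two assertions separately: first that $\mathfrak{s}_c D$ is again southwest, handled by a direct local inspection, and then that there is a weight-preserving bijection, built from the observation that Kohnert moves never alter the number of cells in a column.

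\textbf{The southwest claim.} Since $\mathfrak{s}_c$ only reassigns the cells of columns $c$ and $c+1$, I would check the southwest condition for $\mathfrak{s}_c D$ only for pairs of cells at least one of which lies in column $c$ or $c+1$; every other pair is inherited from $D$. Organizing by the column of the partner cell (a column $<c$, or $=c$, or $=c+1$, or $>c+1$) and using that columns $c,c+1$ of $D$ are ordered by inclusion together with the southwest property of $D$, one finds in each case that the cell demanded at the southwest corner is present -- it is either a cell of $D$ already in that column, or a cell of $D$ that the swap has carried into it. This is routine bookkeeping, and it is exactly here that the inclusion hypothesis is indispensable.

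\textbf{The bijection.} A Kohnert move slides one cell straight down within its own column, so every $T\in\KD(D)$ has exactly $|D_j|$ cells in column $j$ for all $j$, and every element of $\KD(\mathfrak{s}_c D)$ has $|D_{c+1}|$ cells in column $c$ and $|D_c|$ in column $c+1$. I therefore look for a map $\Phi$ that modifies $T$ only inside columns $c,c+1$ and keeps every cell in its original row; such a $\Phi$ is automatically weight-preserving. Call a row \emph{paired} if it meets both columns $c,c+1$ and \emph{free} if it meets exactly one; $\Phi$ fixes the paired and empty rows and reassigns, among the free rows, which of the two columns holds the cell, by the standard bracketing (Lascoux--Sch\"utzenberger / crystal) rule applied to the bottom-to-top word of free rows labeled $c$ or $c+1$: cancel matched opposite pairs, then relabel the surviving block so that the two label-counts are interchanged. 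Then $\Phi(D)=\mathfrak{s}_c D$ by inspection, $\Phi$ preserves $\wt$, and the analogous rule for $\mathfrak{s}_c D$ provides a two-sided inverse $\Psi$, so $\Phi$ is a weight-preserving bijection between all diagrams with the prescribed column content. It remains to show that it restricts to a bijection $\KD(D)\to\KD(\mathfrak{s}_c D)$, i.e.\ that $\Phi\big(\KD(D)\big)\subseteq\KD(\mathfrak{s}_c D)$ and symmetrically for $\Psi$.

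\textbf{The main obstacle.} The hard part will be showing that $\Phi$ is compatible with Kohnert moves. I would argue by induction on the least number of Kohnert moves producing $T$ from $D$, the base case being $\Phi(D)=\mathfrak{s}_c D$. If $T'$ arises from $T$ by one Kohnert move, I must produce $\Phi(T')$ from $\Phi(T)$ by Kohnert moves. When the move acts in a column other than $c,c+1$ it leaves the set of free rows, hence the reassignment, untouched, so the identical move works on $\Phi(T)$. The delicate cases are a move sliding a cell within column $c$ or $c+1$, or a move in a column $>c+1$ that changes which cell of a row is rightmost and thereby changes the set of free rows: such a move can alter the bracketing globally, so $\Phi(T')$ may differ from $\Phi(T)$ in several cells of columns $c,c+1$, and one must realize that difference by a controlled cascade of Kohnert moves in $\Phi(T)$ -- this is where the southwest structure and the inclusion of columns $c,c+1$ pin down the behavior tightly enough for the argument to close. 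Verifying that the local column exchange commutes with every Kohnert move up to such a cascade is the core of the proof; once it is done, the symmetric statement for $\Psi$ follows by exchanging the roles of $D$ and $\mathfrak{s}_c D$, and with $\Psi\circ\Phi=\mathrm{id}$ this yields the asserted bijection.
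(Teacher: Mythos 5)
The first thing to say is that the paper does not prove this lemma at all: it is imported from \cite{Ass-KC} (Lemma~6.3.5 there), and the only thing added in the present paper is a one-paragraph description of the bijection --- label each cell of $D$ and of $\mathfrak{s}_cD$ by its row, and replay any generating sequence of Kohnert moves for $T$ on $\mathfrak{s}_cD$ by tracking which labelled cell moves and between which rows. Your proposed map is genuinely different: a static bracketing rule applied to the terminal diagram $T$, pairing the rows that meet both of columns $c,c+1$ and reflecting the rest, with no reference to the sequence of moves that produced $T$. That is a legitimate alternative strategy, but it carries its own burden of proof, and as submitted your write-up is a plan rather than a proof.

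The decisive gap is the one you name yourself: the inclusion $\Phi(\KD(D))\subseteq\KD(\mathfrak{s}_cD)$, which you call ``the core of the proof.'' You correctly observe that a single Kohnert move on $T$ can change the bracketing so that $\Phi(T')$ differs from $\Phi(T)$ in several cells of columns $c,c+1$, and you then assert that a ``controlled cascade'' of Kohnert moves realizes this difference, without exhibiting the cascade or explaining how the southwest and inclusion hypotheses produce it. Since this inclusion is the entire content of the lemma, nothing has actually been proved. The secondary gap is the southwest claim, which you dismiss as routine bookkeeping; it is not. With the definition quoted in the paper, take $D=\{(1,1),(2,1),(2,2),(2,3)\}$ (rows increasing upward): $D$ is southwest and column $2=\{2\}$ is contained in column $1=\{1,2\}$, yet $\mathfrak{s}_1D=\{(2,1),(1,2),(2,2),(2,3)\}$ has $(2,1)$ strictly northwest of $(1,2)$ with no cell at $(1,1)$. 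The claim is therefore sensitive to the direction of the inclusion (the verification does go through when column $c$ is contained in column $c+1$, and the application in this paper lands on Rothe diagrams, which are southwest for independent reasons), so an honest proof must confront the precise hypotheses of the source rather than defer the check.
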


The bijection asserted in Lemma~\ref{lem:vexillary} can be described as follows. Label cells in row $r$ of $D, \mathfrak{s}_c D$ with $r$. For any sequence of Kohnert moves on $D$, apply the moves while recording the entry of the cell that moves along with the rows between which it moves, and then apply the same sequence to $\mathfrak{s}_c D$ using the labels. 

\begin{definition}
  For $T$ a Kohnert diagram for $D$ and $c\geq 1$ a column index such that columns $c,c+1$ of $D$ are ordered by inclusion, let $\mathfrak{s}_c T$ be the corresponding Kohnert diagram for $\mathfrak{s}_c D$ under the bijection of Lemma~\ref{lem:vexillary}.
\end{definition}

We now state our bijections, though they are not obviously well-defined.

\begin{definition}
  For a compatible pair $(\rho,\alpha)$, the \newword{Kohnert diagram for $(\rho,\alpha)$} is
  \begin{equation}
    \D(\rho,\alpha) = x \cup \mathfrak{s}_{\rho_1} \D((\rho_{\ell},\ldots,\rho_2),(\alpha_{\ell},\ldots,\alpha_2)) ,
  \end{equation}
  where $x$ is the cell in row $\alpha_1$, column $\rho_1$ and $\D(\varnothing,\varnothing)= \varnothing$.
  \label{def:diagram}
\end{definition}

\begin{figure}[ht]
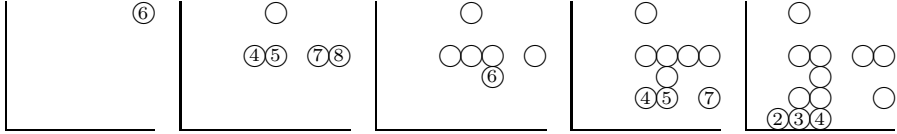

  \begin{displaymath}
    \arraycolsep=0.6\cellsize
    \begin{array}{ccccc}
      \cirtab{ & & & & & & 6 \\ \\ & \\ & \\ & \\ & \\\hline} &
      \cirtab{ & & & & \ \\ \\ & & & 4 & 5 & & 7 & 8 \\ & & \\ & & \\ & \\\hline} &
      \cirtab{ & & & & \ \\ \\ & & & \ & \ & \ & & \ \\ & & & & & 6 \\ & \\ & \\\hline} &
      \cirtab{ & & & \ \\ \\ & & & \ & \ & \ & \ \\ & & & & \ \\ & & & 4 & 5 & & 7 \\ & \\\hline} &
      \cirtab{ & & \ \\ \\ & & \ & \ & & \ & \ \\ & & & \ \\ & & \ & \ & & & \ \\ & 2 & 3 & 4  \\\hline} 
    \end{array}
  \end{displaymath}
  \caption{\label{fig:biject}The Kohnert diagram for the compatible pair $\rho=(6,4,5,7,8,6,4,5,7,2,3,4)$, $\alpha=(6,4,4,4,4,3,2,2,2,1,1,1)$.}
\end{figure}

Fig.~\ref{fig:biject} shows the Kohnert diagram for $(6\mid 4,5,7,8 \mid 6 \mid 4,5,7 \mid 2,3,4)$ with compatible word $(6,4,4,4,4,3,2,2,2,1,1,1)$. We begin with a cell in row $6$, column $6$; then we apply $\mathfrak{s}_8 \mathfrak{s}_7 \mathfrak{s}_5 \mathfrak{s}_4$ and add cells in row $4$, columns $4,5,7,8$; then we apply $\mathfrak{s}_6$ and add a cell in row $3$, column $6$; then we apply $\mathfrak{s}_7 \mathfrak{s}_5 \mathfrak{s}_4$ and add cells in row $2$, columns $4,5,7$; finally we apply $\mathfrak{s}_4 \mathfrak{s}_3 \mathfrak{s}_2$ and add cells in row $1$, columns $2,3,4$. 

The map from compatible pairs to diagrams is easily reversed as follows.

\begin{definition}
  For $T$ a nonempty diagram, the \newword{reduced word for $T$} is
  \begin{equation}
    \W(T) = \W(\mathfrak{s}_c (T\setminus x)) \ c
  \end{equation}
  where $x$ is the lowest then rightmost cell of $T$, and $c$ is its column index.
  \label{def:word}
\end{definition}

To show $\D$ is well-defined, we wish to match letters of $\rho\in\Red(w)$ with cells of $\D(w)$. As letters of $\pi_w$ naturally pair with cells of $\D(w)$, we use \cite[Def~2.7]{Ass19-inv} to define a matching from letters of $\pi_w$ (and hence cells of $\D(w)$) to letters of $\rho\in\Red(w)$.

\begin{definition}[\cite{Ass19-inv}]
  For $\rho,\pi\in\Red(w)$ with $\pi$ super-Yamanouchi, the \newword{matching of $\rho$ to $\pi$} is constructed as follows: for $i$ from $\ell(w)$ to $1$, set $k = \pi_i$ and for $j$ from $\ell(w)$ to $1$ if $\rho_j$ is already paired then decrement $j$ to $j-1$; otherwise if $\rho_j = k$ then pair $\rho_j$ and $\pi_i$; otherwise if $\rho_j = k-1$ then decrement $k$ to $k-1$ and $j$ to $j-1$; otherwise decrement $j$ to $j-1$. 
  \label{def:match}
\end{definition}

This algorithm is well-defined \cite[Thm.~2.8]{Ass19-inv} and can be used to compute the \newword{inversion number of $\rho$}, a statistic giving the minimum number of Coxeter moves needed to obtain $\rho$ from $\pi$. For example, see Fig.~\ref{fig:rex-inv}.

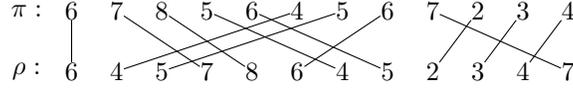
\begin{figure}[ht]
  \begin{center}
    \begin{tikzpicture}[every node/.style={inner sep=0pt},xscale=0.6,yscale=0.4]
      \node at (0,2)  (P0) {$\pi:$};
      \node at (1,2)  (P12) {$6$};
      \node at (2,2)  (P11) {$7$};
      \node at (3,2)  (P10) {$8$};
      \node at (4,2)  (P9)  {$5$};
      \node at (5,2)  (P8)  {$6$};
      \node at (6,2)  (P7)  {$4$};
      \node at (7,2)  (P6)  {$5$};
      \node at (8,2)  (P5)  {$6$};
      \node at (9,2)  (P4)  {$7$};
      \node at (10,2) (P3)  {$2$};
      \node at (11,2) (P2)  {$3$};
      \node at (12,2) (P1)  {$4$};
      \node at (0,0)  (R0) {$\rho:$};
      \node at (1,0)  (R12) {$6$};
      \node at (2,0)  (R11) {$4$};
      \node at (3,0)  (R10) {$5$};
      \node at (4,0)  (R9)  {$7$};
      \node at (5,0)  (R8)  {$8$};
      \node at (6,0)  (R7)  {$6$};
      \node at (7,0)  (R6)  {$4$};
      \node at (8,0)  (R5)  {$5$};
      \node at (9,0)  (R4)  {$2$};
      \node at (10,0) (R3)  {$3$};
      \node at (11,0) (R2)  {$4$};
      \node at (12,0) (R1)  {$7$};
      \draw  (P1) -- (R2) ;
      \draw  (P2) -- (R3) ;
      \draw  (P3) -- (R4) ;
      \draw  (P4) -- (R1) ;
      \draw  (P5) -- (R7) ;
      \draw  (P6) -- (R10) ;
      \draw  (P7) -- (R11);
      \draw  (P8) -- (R5) ;
      \draw  (P9) -- (R6) ;
      \draw (P10) -- (R8) ;
      \draw (P11) -- (R9);
      \draw (P12) -- (R12);
    \end{tikzpicture}
    \caption{\label{fig:rex-inv}The matching of $\rho\in\Red(w)$ to $\pi_w$.}
  \end{center}
\end{figure}

To ensure Lemma~\ref{lem:superY} applies when removing letters of $\rho$, we have the following.

\begin{lemma}
  For $\rho\in\Red(w)$ with $\pi$ super-Yamanouchi, suppose $\pi_i$ matches to $\rho_{p_i}$. Then $\rho_{p_i} \leq \pi_i$, and if $\pi_{i+1} < \pi_{i}$, then $p_{i+1} > p_{i}$.
  \label{lem:match}
\end{lemma}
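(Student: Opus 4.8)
The plan is to dispose of the first inequality in one line and then spend the real work on the second, which I will reduce to a very constrained local situation before comparing two consecutive runs of the inner loop.

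The first inequality is immediate from Definition~\ref{def:match}. In the run of the inner loop that produces the match for $\pi_i$, the variable $k$ is initialized to $\pi_i$ and is only ever decremented, while $\pi_i$ is paired with $\rho_{p_i}$ exactly when $\rho_{p_i}$ equals the then-current value of $k$; hence $\rho_{p_i}\le\pi_i$.

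For the second claim I would first use the super-Yamanouchi hypothesis to pin down the configuration. Positions $i$ and $i+1$ cannot straddle a factor boundary of $\pi$: across such a boundary the left letter is the maximum of its interval, hence at least the minimum of that interval, which by Definition~\ref{def:superY} strictly exceeds the minimum of the next interval, namely the right letter; so across a boundary the left letter is the larger, contradicting $\pi_{i+1}<\pi_i$. Thus positions $i+1$ and $i$ lie in a common increasing-interval factor, which forces $\pi_{i+1}=\pi_i-1$. Moreover, since the outer loop of Definition~\ref{def:match} runs on its index from $\ell(w)$ down to $1$, the match for $\pi_i$ is computed by the run of the inner loop immediately following the run that computes the match for $\pi_{i+1}$, and the only change in the set of already-paired letters between these two runs is that $\rho_{p_{i+1}}$ has just become paired.

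The crux is then a parallel comparison of these two consecutive runs. Writing $v=\pi_{i+1}$, the first run starts from $k=v$ and the second from $k=v+1$; I would run them side by side, tracking the two values of $k$ as two falling markers sweeping the letters of $\rho$ from left to right. Because the first run does not pair before position $p_{i+1}$, no unpaired letter of $\rho$ equal to $v$ lies weakly left of the first position at which the first run's marker drops, so the second run's marker stays at height $v+1$ at least that far; from there one records how the markers separate and re-converge — each drop of the first marker increases their gap by one, each drop of the second marker decreases it by one, and for positions right of $p_{i+1}$ the gap never reaches zero (a gap of one would force the second run to drop on a letter of value $v'$ where $v'$ is the first marker's height, which would have made the first run pair there). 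The goal is to show the second run never meets an unpaired letter equal to its own current marker height before passing $\rho_{p_{i+1}}$; having passed $\rho_{p_{i+1}}$, which it skips since that letter is now paired, the second run pairs strictly to the right, giving $p_i<p_{i+1}$.

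The hard part will be exactly that last point: excluding a premature pairing by the second run. Unlike the first inequality, it genuinely uses that $\rho\in\Red(w)$ rather than an arbitrary word — for an arbitrary word one can plant a letter of value $v+1$ (or $v,v-1,\ldots$) between the first drop and $p_{i+1}$ that the second run would grab too soon. I would handle this either by enlarging the parallel-run invariant to carry enough of the structure of $\rho$ near the letters the matching has already consumed — leaning on the Rothe-diagram description of $\pi_w$ in \cite[Prop.~3.2, Prop.~3.3]{Ass19-inv} together with Definition~\ref{def:superY} — or, more economically, by extracting the needed monotonicity from the proof that the matching is well-defined \cite[Thm.~2.8]{Ass19-inv}. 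In either case, the reduction to a single factor and a single pair of consecutive values keeps the remaining bookkeeping bounded and routine.
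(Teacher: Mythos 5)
Your treatment of the first inequality is fine and matches the paper's: $k$ is initialized to $\pi_i$ and only ever decrements, so the paired letter satisfies $\rho_{p_i}\le\pi_i$. The reduction of the second claim to the case where $\pi_{i+1}$ and $\pi_i$ lie in a common increasing factor with $\pi_{i+1}=\pi_i-1$ is also correct. But the core of the second claim is not proved. Your parallel-run bookkeeping only shows that the two markers cannot collide before position $p_{i+1}$ (the gap stays at least one); it does not exclude the event you yourself flag as ``the goal,'' namely that the second run meets an unpaired letter equal to its own current height $k_2$ strictly to the left of $\rho_{p_{i+1}}$ and pairs there prematurely. Nothing in the gap invariant rules this out: the first run is indifferent to a letter of value $k_1+1=k_2$, so such a letter is invisible to everything you track. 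You correctly observe that this is exactly where $\rho\in\Red(w)$ must enter, but the two strategies you offer for closing the gap --- ``enlarging the parallel-run invariant'' via the Rothe-diagram structure, or ``extracting the needed monotonicity'' from the proof of \cite[Thm.~2.8]{Ass19-inv} --- are not carried out, and neither is routine: whether an unpaired $v+1$ survives to the left of $\rho_{p_{i+1}}$ depends on the entire history of the matching and on global properties of $\rho$ as a reduced word, not on local structure near the consumed letters.

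For comparison, the paper closes precisely this point by abandoning the direct algorithmic analysis and inducting instead on the number of Coxeter relations taking $\pi$ to $\rho$, maintaining the strengthened invariants that either $\rho_{p_i}=\rho_{p_{i+1}}+1$ with the matched letters in the correct relative order, or the two matched letters are separated by a decreasing chain $\rho_{p_{i+1}}+1>\cdots>\rho_{p_i}+1$ lying between them; a case analysis of a single commutation or Yang--Baxter move then preserves these. That pair of invariants is exactly the ``enlarged invariant'' your sketch calls for. Until you formulate and verify something of that strength (or genuinely extract an equivalent statement from \cite[Thm.~2.8]{Ass19-inv} rather than deferring to its proof), the argument has a real hole at its central step.
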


\begin{proof}
  The algorithm attempts to match $\pi_i$ with a letter of $\rho$ equal to $\pi_i$ but can decrement the target value, proving $\rho_{j_1} \leq \pi_i$. Suppose $\pi_{i+1} < \pi_{i}$, in which case $\pi_i = \pi_{i+1}+1$ by Definition~\ref{def:superY}. We prove the following by induction on the number of Coxeter relations needed to obtain $\rho$ from $\pi$:
  \begin{itemize}
  \item[(i)] if $\rho_{p_{i+1}} < \rho_{p_{i}}$, then $\rho_{p_i} = \rho_{p_{i+1}}+1$;
  \item[(ii)] if $\rho_{p_{i+1}} \geq \rho_{p_{i}}$, then there exist letters $\rho_{p_{i+1}}+1>\ldots>\rho_{p_i}+1$ lying strictly between them in this order.
  \end{itemize}
  For $\pi$, (i) is trivial and (ii) is vacuous, proving the base case. Assuming the claims for $\rho$, suppose $\sigma$ is obtained from $\rho$ by a single commutation or Yang--Baxter relation, and consider the matching of $\pi$ to $\rho$ then to $\sigma$ by this relation. If neither $\pi_{i+1}$ nor $\pi_{i}$ matches to the letters affect by the relation, then the claims holds for $\sigma$. Thus we assume at least one of $\pi_{i+1}$, $\pi_{i}$ matches to an affected letter of $\rho$. 

  \textsc{Case (commutation)}: Suppose $\sigma$ is obtained by a commutation relation from $\rho$, say swapping adjacent letters $j,k$ with $|j-k|>1$ and $j$ to the left of $k$. If only one of $\pi_{i+1}$, $\pi_{i}$ matches to the commuting letters, then the relative positions and values of the images of $\pi_{i+1}$, $\pi_{i}$ remain the same in $\sigma$ as in $\rho$, proving the claims still holds. Thus assume $\pi_{i+1}$, $\pi_{i}$ match to $j,k$, respectively as shown on the left side of Fig.~\ref{fig:relations}. If $j>k$, then $\rho$ contradicts to (ii); if $j=k$, then $\rho$ is not reduced; and if $j<k$, then $|j-k|>1$ contradicts (i). Thus these cases cannot occur.

  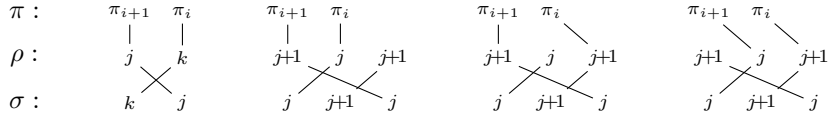
\begin{figure}[ht]
    \begin{center}
      \begin{tikzpicture}[every node/.style={inner sep=2pt},xscale=0.7,yscale=0.6]
        \node at ( 0,3)  (P0) {$\pi:$};
        \node at ( 2,3)  (P1) {$\scriptstyle \pi_{i+1}$};
        \node at ( 3,3)  (P2) {$\scriptstyle \pi_{i}$};
        \node at ( 5,3)  (P3) {$\scriptstyle \pi_{i+1}$};
        \node at ( 6,3)  (P4) {$\scriptstyle \pi_{i}$};
        \node at ( 9,3)  (P6) {$\scriptstyle \pi_{i+1}$};
        \node at (10,3)  (P7) {$\scriptstyle \pi_{i}$};
        \node at (13,3)  (P9) {$\scriptstyle \pi_{i+1}$};
        \node at (14,3)  (PT) {$\scriptstyle \pi_{i}$};
        \node at ( 0,2)  (R0) {$\rho:$};
        \node at ( 2,2)  (R1) {$\scriptstyle j$};
        \node at ( 3,2)  (R2) {$\scriptstyle k$};
        \node at ( 5,2)  (R3) {$\scriptstyle j\!+\!1$};
        \node at ( 6,2)  (R4) {$\scriptstyle j      $};
        \node at ( 7,2)  (R5) {$\scriptstyle j\!+\!1$};
        \node at ( 9,2)  (R6) {$\scriptstyle j\!+\!1$};
        \node at (10,2)  (R7) {$\scriptstyle j      $};
        \node at (11,2)  (R8) {$\scriptstyle j\!+\!1$};
        \node at (13,2)  (R9) {$\scriptstyle j\!+\!1$};
        \node at (14,2)  (RT) {$\scriptstyle j      $};
        \node at (15,2)  (RE) {$\scriptstyle j\!+\!1$};
        \node at ( 0,1)  (S0) {$\sigma:$};
        \node at ( 2,1)  (S1) {$\scriptstyle k$};
        \node at ( 3,1)  (S2) {$\scriptstyle j$};
        \node at ( 5,1)  (S3) {$\scriptstyle j      $};
        \node at ( 6,1)  (S4) {$\scriptstyle j\!+\!1$};
        \node at ( 7,1)  (S5) {$\scriptstyle j      $};
        \node at ( 9,1)  (S6) {$\scriptstyle j      $};
        \node at (10,1)  (S7) {$\scriptstyle j\!+\!1$};
        \node at (11,1)  (S8) {$\scriptstyle j      $};
        \node at (13,1)  (S9) {$\scriptstyle j      $};
        \node at (14,1)  (ST) {$\scriptstyle j\!+\!1$};
        \node at (15,1)  (SE) {$\scriptstyle j      $};
        \draw  (P1) -- (R1) ;
        \draw  (P2) -- (R2) ;
        \draw  (P3) -- (R3) ;
        \draw  (P4) -- (R4) ;
        \draw  (P6) -- (R6) ;
        \draw  (P7) -- (R8) ;
        \draw  (P9) -- (RT) ;
        \draw  (PT) -- (RE) ;
        \draw  (R1) -- (S2) ;
        \draw  (R2) -- (S1) ;
        \draw  (R3) -- (S5) ;
        \draw  (R4) -- (S3) ;
        \draw  (R5) -- (S4) ;
        \draw  (R6) -- (S8) ;
        \draw  (R7) -- (S6) ;
        \draw  (R8) -- (S7) ;
        \draw  (R9) -- (SE) ;
        \draw  (RT) -- (S9) ;
        \draw  (RE) -- (ST) ;
      \end{tikzpicture}
      \caption{\label{fig:relations}The four potential cases of $\pi$ matching to $\rho$ with an additional Coxeter relation increasing the crossing number.}
    \end{center}
  \end{figure}

  \textsc{Case (Yang--Baxter)}: Suppose $\sigma$ is obtained by a commutation relation from $\rho$, say braiding $j+1,j,j+1$ to $j,j+1,j$. If only one of $\pi_{i+1}$, $\pi_{i}$ matches to one of the braiding letters, then the relative positions of the images of $\pi_{i+1}$, $\pi_{i}$ remain the same in $\sigma$ as in $\rho$, proving the first claim. If $\pi_{i+1}$ matches to an unaffected letter, then either $\pi_{i}$ matches to the same value with the same set of larger values between them, or $\pi_{i}$ matches to $j$ with $j+1$ immediately to its left, proving the claims for $\sigma$. Similarly, if $\pi_{i}$ matches to an unaffected letter, then either $\pi_{i+1}$ matches to the same value or to $j$, again proving the claims for $\sigma$. Finally, we have left the three cases where both $\pi_{i+1}$, $\pi_{i}$ match to affected letters of $\rho$; see Fig.~\ref{fig:relations}. The middle two cases contradict claim (ii), and so cannot occur, and the rightmost case maintains relative positions and values for the images in $\sigma$, proving the claims.  
\end{proof}

\begin{theorem}
  For $(\rho,\alpha)$ a compatible pair with $\rho\in\Red(w)$, $\D(\rho,\alpha)$ is a well-defined Kohnert diagram for $\D(w)$ and 
  \begin{equation}
    x_{\alpha_1} \cdots x_{\alpha_{n}} = x_1^{\wt(\D(\rho,\alpha))_1} \cdots x_n^{\wt(\D(\rho,\alpha))_n}.  \vspace{-\baselineskip}
    \label{e:forward-weight}
  \end{equation}      
  \label{thm:forward}
\end{theorem}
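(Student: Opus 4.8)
The plan is to prove both assertions at once by induction on $\ell=\ell(w)$, peeling off the last letter $c:=\rho_1$ of $\rho$ and the last cell placed by Definition~\ref{def:diagram}. The base case $\ell=0$ is trivial: $\rho=\alpha=\varnothing$, $w$ is the identity, $\D(\varnothing,\varnothing)=\varnothing=\D(w)$, and both sides of \eqref{e:forward-weight} reduce to $1$. For the inductive step, put $v:=\alpha_1$, $\rho':=(\rho_\ell,\dots,\rho_2)$, and $\alpha':=(\alpha_\ell,\dots,\alpha_2)$. Deleting the last letter of a reduced word leaves a reduced word, so $\rho'\in\Red(w')$ with $w':=ws_c$ and $\ell(w')=\ell-1$; in particular $w_c>w_{c+1}$. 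The inequalities making $\alpha'$ a $\rho'$-compatible word are exactly those defining a $\rho$-compatible word with the conditions at index $1$ omitted, so $(\rho',\alpha')$ is a compatible pair for $w'$ of length $\ell-1$. By the inductive hypothesis $\D(\rho',\alpha')$ is a well-defined Kohnert diagram for $\D(w')$ with $\wt(\D(\rho',\alpha'))_i=\#\{\,j\ge 2:\alpha_j=i\,\}$ for all $i$; this is what keeps the recursion in Definition~\ref{def:diagram} legitimate, once we also check $\mathfrak{s}_c$ is applicable.

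Since $\D(w')$ is a Rothe diagram, columns $c$ and $c+1$ of $\D(w')$ are ordered by inclusion, so Lemma~\ref{lem:vexillary} applies and $\mathfrak{s}_c\D(\rho',\alpha')$ is a well-defined Kohnert diagram for $\mathfrak{s}_c\D(w')$, of the same weight (permuting columns leaves every row count unchanged). The key step is the identification $\mathfrak{s}_c\D(w')=\D(w)\setminus\{z\}$, where $z$ is the cell of $\D(w)$ matched to $\rho_1$ under Definition~\ref{def:match}: using the matching of $\rho$ to $\pi_w$ together with Lemma~\ref{lem:match}, the last letter $\rho_1=c$ pairs with a letter $\pi_{i_0}$ of $\pi_w$ with $\rho_1\le\pi_{i_0}$ and ($i_0=1$ or $\pi_{i_0}>\pi_{i_0-1}$), so $\pi_{i_0}$ is the rightmost cell of its row; tracing how the matching algorithm decrements its target value from $\pi_{i_0}$ down to $c$ and invoking \cite[Prop.~3.3]{Ass19-inv} shows $\pi_{i_0}$ lies in column $c$ and in the lowest row of $\D(w)$ whose rightmost cell is in column $c$. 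Lemma~\ref{lem:superY}, applied with $\widehat w=w'$, then yields both that $\widehat\pi=\pi_{w'}$ (confirming the recursion stays on a genuine super-Yamanouchi word) and that $\D(w')$ is $\D(w)\setminus\{z\}$ with columns $c,c+1$ permuted; since the map of Lemma~\ref{lem:vexillary} acts on diagrams by permuting those columns, $\mathfrak{s}_c\D(w')=\D(w)\setminus\{z\}$, so $\mathfrak{s}_c\D(\rho',\alpha')\in\KD(\D(w)\setminus\{z\})$.

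Finally $\D(\rho,\alpha)=x\cup\mathfrak{s}_c\D(\rho',\alpha')$ with $x$ the cell in row $v$, column $c$. The weight identity \eqref{e:forward-weight} is then immediate: adding one cell in row $v$ gives $\wt(\D(\rho,\alpha))_i=\wt(\mathfrak{s}_c\D(\rho',\alpha'))_i+[\,i=v\,]=\#\{\,j\ge 2:\alpha_j=i\,\}+[\,i=\alpha_1\,]=\#\{\,j\ge 1:\alpha_j=i\,\}$. The substantive point that remains is that $x\cup\mathfrak{s}_c\D(\rho',\alpha')$ genuinely lies in $\KD(\D(w))$, and this I expect to be the main obstacle. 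It has two parts. First, one must verify $v\le r$, where $r$ is the row of $z$, and that position $(v,c)$ is empty in $\mathfrak{s}_c\D(\rho',\alpha')$; I would obtain these from $v=\alpha_1\le\rho_1=c$ together with the compatibility inequalities and the fact that column $c$ of $\mathfrak{s}_c\D(\rho',\alpha')$ has exactly one fewer cell than column $c$ of $\D(w)$. Second — and this is the crux — one needs the structural fact that the descent of $z$ down column $c$ is independent of every other Kohnert move: because $z$ is both the rightmost cell of its row and lowest in column $c$, the southwest property forces every row of $\D(w)$ below $r$ to be empty in columns $\ge c$, and one shows that any sequence of Kohnert moves carrying $\D(w)\setminus\{z\}$ to a diagram $T$ can be interleaved with a stepwise descent of $z$ to carry $\D(w)$ to $(r',c)\cup T$ for any admissible $r'\le r$, and conversely that every Kohnert diagram of $\D(w)$ arises this way. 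Establishing this independence is the analogue for the bottom cell of a column of the column-swap bijection underlying Lemma~\ref{lem:vexillary}, and I would prove it by the same kind of move-tracking argument, using the southwest property of $\D(w)$ and the inclusion ordering of columns $c$ and $c+1$.
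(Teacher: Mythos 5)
Your inductive skeleton is the paper's: peel off $\rho_1=c$, apply the inductive hypothesis to $(\rho',\alpha')$ for $w'=ws_c$, use Lemma~\ref{lem:match} to locate the cell $z$ of $\D(w)$ matched to $\rho_1$, use Lemma~\ref{lem:superY} to get $\mathfrak{s}_c\D(w')=\D(w)\setminus\{z\}$, and use Lemma~\ref{lem:vexillary} plus the compatibility inequalities for well-definedness and the weight count. All of that is fine and matches the paper. The problem is in the step you yourself flag as the crux, where your proposed justification rests on a false structural claim. You assert that ``$z$ is both the rightmost cell of its row and lowest in column $c$'' and that ``the southwest property forces every row of $\D(w)$ below $r$ to be empty in columns $\ge c$.'' Neither holds. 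Lemma~\ref{lem:superY} only guarantees that no \emph{lower row has its rightmost cell} in column $c$; lower rows may perfectly well contain cells in column $c$ (and hence, by southwest-ness, cells further right). Concretely, take $w=163425$, so $\D(w)=\{(2,2),(2,3),(2,4),(2,5),(3,2),(4,2)\}$ and $c=2$: the lowest row whose rightmost cell is in column $2$ is row $3$, so $z=(3,2)$, yet $(2,2)$ sits below $z$ in column $2$ and row $2$ has cells in columns $2,3,4,5$. This situation is not vacuous — e.g.\ the Kohnert diagram obtained by dropping $(4,2)$ to $(1,2)$ has its lowest-then-rightmost cell in column $2$, so compatible pairs with $\rho_1=2$ genuinely occur here.

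As a consequence, the ``independence'' you want to prove is false as you state it: in the example above $z$ cannot descend to row $2$ until the cell $(2,2)$ has first vacated, so the descent of $z$ must be interleaved \emph{after} certain other moves, not performed independently of them. Likewise, deducing that position $(v,c)$ is empty in $\mathfrak{s}_c\D(\rho',\alpha')$ from the fact that its column $c$ has one fewer cell than column $c$ of $\D(w)$ is a non sequitur; a cardinality count does not locate the empty slot (that emptiness is exactly the set-theoretic well-definedness of $\D(\rho,\alpha)$, which you must get from the compatibility inequalities, as earlier in the argument). The correct, weaker fact that makes the paper's version of this step go through is: every row of $\D(w)$ strictly below $z$ either contains a cell in column $c$ together with cells strictly to its right, or is empty weakly to the right of column $c$ (this \emph{is} a southwest consequence). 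Hence $z$, being rightmost in its row, jumps over the occupied positions of column $c$ and lands only in rows where it is again rightmost, so it can continue descending to row $\alpha_1$ before the remaining recorded moves are performed. Until you replace your premise with this and actually carry out the interleaving/move-tracking argument (which you only promise to do), the proof of $\D(\rho,\alpha)\in\KD(w)$ is incomplete.
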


\begin{proof}
  Consider the case $\ell(w)=1$, where we have $\Red(w)=\{(r)\}$. Here $\D(w)$ is the diagram with a single cell in row $r$, column $r$. The diagram $\D((r),\alpha)$ is a single cell in row $\alpha_1$, column $r$ where $\alpha = (\alpha_1)$. Any compatible word $\alpha = (\alpha_1)$ has, by definition, $\alpha_1 \leq r$, and so $\D((r),\alpha)$ is obtained by exactly $r - \alpha_1\geq 0$ Kohnert moves on $\D(w)$. Thus, $\D((r),\alpha) \in \KD(w)$, so we proceed by induction on $\ell(w)$.

  Suppose $\rho \in \Red(w)$ with $\ell=\ell(w)>1$, and take $\alpha$ any $\rho$-compatible word. Let $\widehat{\rho} = (\rho_{\ell},\cdots,\rho_2)$, $\widehat{\alpha} = (\alpha_{\ell},\cdots,\alpha_2)$, and $\widehat{w} = w s_{\rho_1}$. Then $\widehat{\rho}\in\Red(\widehat{w})$ has compatible word $\widehat{\alpha}$ and $\ell(\widehat{w}) = \ell(w)-1$. By induction, $\D(\widehat{\rho},\widehat{\alpha}) \in \KD(\widehat{w})$ and \eqref{e:forward-weight} holds for $\widehat{\rho},\widehat{\alpha}$. Since every pair of adjacent columns of $\D(\widehat{w})$ is ordered by containment, $\mathfrak{s}_{\rho_1}\D(\widehat{\rho},\widehat{\alpha})$ is a well-defined Kohnert diagram for $\mathfrak{s}_{\rho_1}\D(\widehat{w})$. The $\rho$-compatible conditions state $\alpha_{i+1} \geq \alpha_{i}$ with $\alpha_{i+1} > \alpha_{i}$ whenever $\rho_{i+1} > \rho_i$. Thus if $\rho_{2} > \rho_1$, then the new cell $x$ is placed into a strictly lower row, and if $\rho_{2} < \rho_1$, then $x$ is placed into a strictly rightward column. Therefore $\D(\rho,\alpha)$ is well-defined and \eqref{e:forward-weight} holds for $\D(\rho,\alpha)$. It remains only to show $\D(\rho,\alpha)\in\KD(w)$.
  
  Suppose $\rho_1$ matches to some letter $\pi_{r}$ for $\pi = \pi_w$ super-Yamanouchi. By Lemma~\ref{lem:match}, either $r=1$ or $\pi_r > \pi_{r-1}$, and either way $\pi_r$ lies at the end of its row in $\D(w)$. Thus by Lemma~\ref{lem:superY}, $\widehat{\pi} = \pi_{\ell}\cdots\pi_{i+1}\pi_{i-1}\cdots\pi_1$ is super-Yamanouchi for $\widehat{w}$ and $\D(\widehat{w}) = \mathfrak{s}_{\rho_1} \left( \D(w) \setminus \{(\pi_w)_i\} \right)$. Thus we can insert a cell in row $\pi_r$, column $\rho_1+1$ of $\D(\widehat{w})$ that we push, via Kohnert moves, down to row $\alpha_1$, and then perform the remaining Kohnert moves to obtain $\mathfrak{s}_{\rho_1}\D(\widehat{\rho},\widehat{\alpha})$ union a cell in row $\alpha_1$, column $\rho_1$. Therefore $\D(\rho,\alpha)$ is indeed a Kohnert diagram for $w$.
\end{proof}

To show $\T$ is well-defined, we give the analog of Lemma~\ref{lem:match} for Kohnert diagrams.

\begin{lemma}
  Let $T\in\KD(w)$ and suppose $x$ is the lowest then rightmost cell of $T$. If $x$ lies in column $c$, then some row of $\D(w)$ has its rightmost cell in column $c$.
  \label{lem:end}
\end{lemma}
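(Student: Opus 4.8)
The plan is to argue by contradiction, reducing to a two-column problem. Let $b$ denote the lowest occupied row of $T$, so $x$ lies in row $b$ and column $c$, is the rightmost cell of row $b$, and every row below $b$ of $T$ is empty; assume, toward a contradiction, that no row of $\D(w)$ has its rightmost cell in column $c$. Two facts set up the reduction. First, a Kohnert move leaves the cell it moves in its original column, so the number of cells in each column is the same in every diagram of $\KD(w)$; in particular column $c$ of $\D(w)$ is nonempty. Second, restriction to columns $c$ and $c+1$ commutes with Kohnert moves: a move on the rightmost cell of a row in a column other than $c,c+1$ does not affect columns $c,c+1$, whereas a move on such a cell in column $c$ (resp.\ $c+1$) restricts to a Kohnert move on the two-column diagram --- in the former case that cell is the only cell of its row among columns $\{c,c+1\}$, in the latter it is still the rightmost one there, and in both cases the landing row is determined by the single column it stays in, which restriction leaves unchanged. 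Hence $T|_{\{c,c+1\}}$ is obtained from $\D(w)|_{\{c,c+1\}}$ by Kohnert moves, and its lowest occupied row is again $b$, with all rows below empty.

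Next I would prove that, under the standing assumption, column $c$ of $\D(w)$ is contained, as a set of row indices, in column $c+1$ of $\D(w)$. Suppose not: say $(i,c)\in\D(w)$ but $(i,c+1)\notin\D(w)$. Since row $i$ is nonempty yet does not end in column $c$, it has a cell in a column $\ge c+2$, forcing $w_i>c+1$; then $(i,c+1)\notin\D(w)$ forces the value $c+1$ into a position $m<i<w^{-1}(c)$. But then row $m=w^{-1}(c+1)$ of $\D(w)$ contains $(m,c)$ and no cell further right (a cell right of $c$ would require a value strictly between $c$ and $c+1$), so row $m$ ends in column $c$ --- a contradiction. (If column $c$ of $\D(w)$ is empty the inclusion is vacuous; but then column $c$ of $T$ is empty, contradicting $x\in$ column $c$.)

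Write $E=\D(w)|_{\{c,c+1\}}$; by the inclusion just established, every cell of $E$ in column $c$ sits directly left of a cell of $E$ in column $c+1$. I would then prove by induction on the number of Kohnert moves that every diagram $F$ obtained from $E$ satisfies the \emph{prefix inequality}: for every row $t$, the number of cells of $F$ in column $c+1$ in rows $\le t$ is at least the number of cells of $F$ in column $c$ in rows $\le t$. This holds for $E$. A Kohnert move on a column-$(c+1)$ cell only raises the left-hand counts, preserving the inequality. The delicate case is a move on a column-$c$ cell: it descends from a row $i$ to a row $i'$, past rows $i'+1,\dots,i-1$ that are all occupied in column $c$, while row $i$ --- housing the moved cell as its rightmost --- has no cell in column $c+1$. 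Such a move raises the column-$c$ prefix counts by one exactly on $[i',i-1]$, so I need the prefix inequality to have been \emph{strict} on that interval before the move. I would get this by a downward induction from $t=i-1$: the column-$(c+1)$ count through $i-1$ equals that through $i$ while the column-$c$ count through $i-1$ is one less than that through $i$, so strictness at $i-1$ follows from the non-strict inequality at $i$; and passing from $t+1$ to $t$ (noting $(t+1,c)$ is occupied) either drops nothing from the column-$(c+1)$ count, if row $t+1$ has no such cell, or drops one from each side, preserving strictness either way. This strict-inequality bookkeeping in the column-$c$ case is the one step I expect to require real care; the rest is routine.

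Finally, applying the prefix inequality to $F=T|_{\{c,c+1\}}$ at $t=b$: since all rows below $b$ are empty, it asserts that the number of column-$(c+1)$ cells in row $b$ is at least the number of column-$c$ cells in row $b$, the latter being at least $1$ because $x=(b,c)\in F$. Hence $(b,c+1)\in T$, contradicting that $x$ is the rightmost cell of row $b$. This contradiction proves the lemma.
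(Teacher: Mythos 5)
Your proof is correct and follows essentially the same strategy as the paper's: show that (under the contrary assumption) every row of $\D(w)$ meeting column $c$ also meets a fixed column to its right, then establish a prefix-count inequality between those two columns that is preserved by Kohnert moves and is violated at the row of $x$. The only cosmetic differences are that you compare column $c$ with column $c+1$ directly, whereas the paper uses the smallest $c'>c$ sharing a row with column $c$ (your containment argument shows this $c'$ is in fact $c+1$), and your downward-induction strictness bookkeeping replaces the paper's argument that a violating move would force a cell in row $s$, column $c'$.
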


\begin{proof}
  Let $r_1 < \cdots < r_k$ be the rows of $\D(w)$ with a cell in column $c$. If some row $r_i$ has its rightmost cell in column $c$, then the lemma is proved. Otherwise, we may take $c'>c$ the smallest column index such that some row $r_i$ has a cell in column $c'$. We claim all rows $r_i$ have a cell in column $c'$. If not, then let $i$ be the smallest row index such that row $r_i$ has no cell in column $c'$. We may describe $\D(w)$ by placing bullets in row $i$, column  $w_i$ and killing all cells above and to the right, as illustrated in Fig.~\ref{fig:Rothe}. If $i=1$, then since $r_1$ has a cell in column $c<c'$ and no cell in column $c'$, there is a bullet in column $c'$ below row $r_1$ ensuring no row $r>r_1$ can have a cell in column $c'$, contradicting the choice of $c'$. If $i>1$, then there is a bullet column $c'$ in some row $r$ with $r_{i-1} > r \geq r_i$. Since both rows $r_{i-1},r_i$ have a cell in column $c$, the bullet in column $c$ lies above row $r_i>r$, and so row $r$ must have a cell in column $c$ as well, in which case $r=r_i$. However, by choice of $c'$ and $r_i$, row $r_i$ must have a cell strictly to the right of $c'$, a contradiction. Therefore all rows $r_i$ have cells in both columns $c<c'$, and no cells between these. To arrive at a contradiction, we claim for any $S\in\KD(w)$ and any $r\geq 1$, we have
  \begin{equation}
    \#\{x\in S \mid \mathrm{row}(x) \leq r, \ \mathrm{col}(x) = c \} \leq
    \#\{x\in S \mid \mathrm{row}(x) \leq r, \ \mathrm{col}(x) = c' \} .
    \label{e:2.2}
  \end{equation}
  As the row of $x$ in $T$ violates this inequality, we have a contradiction, thereby proving the lemma. Suppose $S\in\KD(w)$ satisfies \eqref{e:2.2}, but applying a Kohnert move to a cell $y$ in $S$ results in a violation. Since $y$ moved down, we must have $y$ in column $c$, say going from row $s$ down to the nearest empty position in row $r<s$ causing \eqref{e:2.2} to fail for row $r$. Since \eqref{e:2.2} holds for $S$, we must have equality at row $r$ for $S$. Since pushing $y$ is a Kohnert move, every row above $r$ and weakly below $s$ has a cell in column $c$. For \eqref{e:2.2} to hold for $S$ for rows up to and including $s$, the same must be true for column $c'$ as well. However, this means there is a cell in row $s$, column $c'$, a contradiction to pushing $y$ being a Kohnert move. Therefore Kohnert moves preserve \eqref{e:2.2}, and the theorem follows.
\end{proof}

\begin{theorem}
  For $T$ a Kohnert diagram for $\D(w)$, $\W(T)$ is a well-defined reduced word for $w$ and $\alpha(T) = (n^{\wt(T)_n},\ldots,1^{\wt(T)_1})$ is $\W(T)$-compatible. 
  \label{thm:backward}
\end{theorem}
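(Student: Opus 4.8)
The plan is to prove Theorem~\ref{thm:backward} by induction on $\ell(w)$, which equals the number of cells of any $T\in\KD(w)$ since Kohnert moves preserve cell count; the induction runs in parallel with the proof of Theorem~\ref{thm:forward}. The base case $\ell(w)=0$ is immediate: the only Kohnert diagram is $\varnothing$, and $\W(\varnothing)=\varnothing$ is the reduced (empty) word for the identity, with $\alpha(\varnothing)=\varnothing$ vacuously compatible. For the inductive step, let $\ell=\ell(w)>0$, let $T\in\KD(w)$, let $x$ be the lowest then rightmost cell of $T$, and let $c$ be its column. We want to strip off $x$ and the column‑swap $\mathfrak{s}_c$, reducing to a Kohnert diagram for $\D(ws_c)$ with one fewer cell, so that the recursion $\W(T)=\W(\mathfrak{s}_c(T\setminus x))\,c$ becomes meaningful and the inductive hypothesis applies.

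To set that up, first apply Lemma~\ref{lem:end}: some row of $\D(w)$ has its rightmost cell in column $c$. Let $r$ be the lowest such row, and let $\pi_i$ be the entry of the super‑Yamanouchi word $\pi=\pi_w$ occupying the rightmost cell of row $r$, so $\pi_i$ lies in column $c$. Since $\pi_i$ sits at the end of its row, the way $\pi_w$ is read off $\D(w)$ (top row first, each row left to right) forces $i=1$ or $\pi_i>\pi_{i-1}$: if $i>1$ then $\pi_{i-1}$ begins the next row read, whose entries are all smaller. Hence Lemma~\ref{lem:superY} applies with this $i$ and this $c$ (the column condition there holds because $r$ is the \emph{lowest} row with rightmost cell in column $c$), giving that $\widehat{\pi}=\pi_\ell\cdots\pi_{i+1}\pi_{i-1}\cdots\pi_1$ is the super‑Yamanouchi word for $\widehat{w}:=ws_c$, that $\ell(\widehat{w})=\ell-1$, and that $\D(\widehat{w})=\mathfrak{s}_c\bigl(\D(w)\setminus\{(\pi_w)_i\}\bigr)$. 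The crux is then to establish
\[
  T\setminus x \in \KD\bigl(\D(w)\setminus\{(\pi_w)_i\}\bigr),
\]
which is exactly what makes $\mathfrak{s}_c(T\setminus x)$ well defined (note $\D(w)\setminus\{(\pi_w)_i\}$ is southwest and its columns $c,c+1$ are ordered by inclusion, since removing the rightmost cell of row $r$ from column $c$ cannot create an incomparability), and which by Lemma~\ref{lem:vexillary} is equivalent to $\mathfrak{s}_c(T\setminus x)\in\KD(\D(\widehat{w}))$. Granting this, the inductive hypothesis gives that $\W(\mathfrak{s}_c(T\setminus x))$ is a well‑defined reduced word for $\widehat{w}$; then $\W(T)=\W(\mathfrak{s}_c(T\setminus x))\,c$ is a length‑$\ell$ word representing $\widehat{w}s_c=w$, hence reduced for $w$ since $T$ has $\ell=\ell(w)$ cells.

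Next I would verify the compatibility conditions for $(\W(T),\alpha(T))$. Put $m=\mathrm{row}(x)$; since $x$ is lowest then rightmost, $m$ is the least index with $\wt(T)_m>0$, so the final entry of $\alpha(T)=(n^{\wt(T)_n},\ldots,1^{\wt(T)_1})$ is $m=\alpha(T)_1$, and because $\mathfrak{s}_c$ preserves each row‑count, deleting that final entry produces exactly $\alpha(\mathfrak{s}_c(T\setminus x))$. The conditions at indices $j\ge 2$ therefore follow from the inductive hypothesis, and weak monotonicity $\alpha(T)_{j+1}\ge\alpha(T)_j$ is built into the definition of $\alpha(T)$. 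The condition $\alpha(T)_1\le\W(T)_1=c$ says $m\le c$: column $c$ of $\D(w)$ is nonempty, and if $\{w_1,\dots,w_c\}=\{1,\dots,c\}$ then column $c$ would be empty, so some $w_i>c$ with $i\le c$ and the least such $i$ is the row of the bottom cell of column $c$; as Kohnert moves only move cells downward within their column, $T$ has a cell in column $c$ in a row $\le c$, whence $m\le c$. Finally, if $\W(T)_2>\W(T)_1=c$ then $\alpha(T)_2>\alpha(T)_1=m$ must hold: any other cell of $T$ in row $m$ would lie strictly left of $x$, hence in a column $<c$, so $\mathfrak{s}_c$ fixes it and the lowest then rightmost cell of $\mathfrak{s}_c(T\setminus x)$ would lie in row $m$ in a column $<c$, forcing $\W(T)_2<c$; thus row $m$ of $T$ contains only $x$, $\mathfrak{s}_c(T\setminus x)$ is empty in row $m$, and $\alpha(T)_2=\alpha(\mathfrak{s}_c(T\setminus x))_1>m$.

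The main obstacle is the displayed crux, $T\setminus x\in\KD(\D(w)\setminus\{(\pi_w)_i\})$. One cannot simply fix a Kohnert‑move sequence for $T$ and delete "the cell that lands at $x$": the cell whose trajectory ends at $x$ need not be $(\pi_w)_i$ — already for $w=1342$ the cell reaching the lowest‑rightmost position starts two rows above $(\pi_w)_i$ — and deleting a cell from $\D(w)$ can invalidate later moves, since moves select the rightmost cell of a row. I expect the correct argument to work inside column $c$ directly, using the monotonicity inequality \eqref{e:2.2} from the proof of Lemma~\ref{lem:end} together with the choice of $r$ as the lowest row of $\D(w)$ with rightmost cell in column $c$, to show that deleting the row‑$r$ cell of column $c$ has the same effect on the relevant Kohnert closure as deleting the cell that actually descends to $x$; Lemma~\ref{lem:vexillary} then transports the conclusion to $\D(\widehat{w})$ and the induction closes.
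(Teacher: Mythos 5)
Your proposal reproduces the architecture of the paper's own proof: induction on $\ell(w)$; Lemma~\ref{lem:end} to locate a row of $\D(w)$ whose rightmost cell is in column $c$; Lemma~\ref{lem:superY} applied to the \emph{lowest} such row to get $\D(\widehat{w})=\mathfrak{s}_c\bigl(\D(w)\setminus\{(\pi_w)_i\}\bigr)$ with $\widehat{w}=ws_c$; Lemma~\ref{lem:vexillary} to transport Kohnert diagrams across the column swap; and then the observation that the $k$th lowest cell of column $c$ sits weakly below row $c+k-1$, combined with the lowest-then-rightmost selection rule, to verify $\alpha(T)_j\le\W(T)_j$ and the strict increase $\alpha(T)_{j+1}>\alpha(T)_j$ when $\W(T)_{j+1}>\W(T)_j$. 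Your compatibility half is complete and, if anything, more carefully argued than the paper's (the case split on whether row $m$ contains a cell other than $x$, and the check that $\mathfrak{s}_c$ cannot move a row-$m$ cell into column $\ge c$, are spelled out where the paper only gestures).

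The genuine gap is the one you flag yourself: you never establish $T\setminus x\in\KD\bigl(\D(w)\setminus\{(\pi_w)_i\}\bigr)$, only explain why the naive ``delete the trajectory of the cell landing at $x$'' argument fails and speculate that the inequality \eqref{e:2.2} should be leveraged. Without this claim the recursion $\W(T)=\W(\mathfrak{s}_c(T\setminus x))\,c$ cannot be fed into the inductive hypothesis, so the reduced-word half of the theorem does not close; your diagnosis of \emph{why} it is delicate (the descending cell need not originate at $(\pi_w)_i$, and deleting a cell of $\D(w)$ can change which cell is rightmost in a row, invalidating later moves) is accurate, which makes the omission a real hole rather than an oversight. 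For what it is worth, the paper itself dispatches exactly this step with a bare ``Thus'' after invoking Lemma~\ref{lem:superY}, so you have correctly isolated the single load-bearing assertion of the proof; but as a standalone argument your proposal is incomplete at precisely that point, and a proof would need to show that the Kohnert closure of $\D(w)\setminus\{(\pi_w)_i\}$ contains $T\setminus x$ --- for instance by an invariant-style argument in the spirit of \eqref{e:2.2} comparing column populations of $T$ and of diagrams reachable from $\D(w)\setminus\{(\pi_w)_i\}$ --- rather than asserting or hoping for it.
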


\begin{proof}
  Consider the case $\ell(w)=1$, so that $\Red(w)=\{(r)\}$ and $\D(w)$ has a single cell in row $r$, column $r$. Every Kohnert diagram $T$ has a single cell in column $r$ weakly below row $r$, and so $\W(T) = (r)\in\Red(w)$ and $\alpha(T)=(\alpha_1)$ where $\alpha_1\leq r$ is the row of the cell. Thus we may proceed by induction on $\ell(w)$.

  Suppose $T\in\KD(w)$ with $\ell=\ell(w)>1$, and let $x$ be the lowest then rightmost cell of $T$, say in column $c$. By Lemma~\ref{lem:end}, some row of $\D(w)$ ends in column $c$. In this case, there exists an index $r$ such that for $\pi=\pi_w$ super-Yamanouchi, $\pi_r$ lies at the end of its row and in column $c$ and no lower row has its rightmost cell in column $c$. By Lemma~\ref{lem:superY}, deleting this letter from $\pi$ yields the super-Yamanouchi word for $\widehat{w} = w s_{\rho_c}$ and $\D(\widehat{w}) = \mathfrak{s}_{c} \left( \D(w) \setminus \{\pi_r\} \right)$. Thus $T\setminus x \in \KD(\D(w) \setminus \{\pi_r\})$, and so by Lemma~\ref{lem:vexillary}, we have $\mathfrak{s}_c(T\setminus x) \in \KD(\mathfrak{s}_{c} \left( \D(w) \setminus \{\pi_r\} \right) = \KD(\widehat{w})$. Since $\ell(\widehat{w}) = \ell(w)-1$, by induction $\W(\mathfrak{s}_c(T\setminus x)) \in \Red(\widehat{w})$, and so $\W(T) = \W(\mathfrak{s}_c(T\setminus x)) c \in \Red(w)$. 

  From \eqref{e:rothe}, the lowest cell of $\D(w)$ in column $c$ is at most the smallest index $i$ such that $w_i > c$. Therefore the lowest cell in column $c$ lies weakly below row $c$ since otherwise the first $c$ letters of $w$ must be strictly less than $c$, an impossibility. Extrapolating this, the $k$th lowest cell in column $c$ must lie weakly below row $c+k-1$ for all $c,k$. As Kohnert moves push cells down within their columns, this implies the $k$th lowest cell in column $c$ of $T\in\KD(w)$ must also lie weakly below row $c+k-1$. Thus for $x$ the lowest then rightmost cell of $T$ and $c$ its column index, $\alpha(T)_1$ is the row of $x$, and so $\alpha(T)_1 \leq c$. Removing $x$ ensures $\mathfrak{s}_c \cdot (T\setminus x)$ also has the property that the $k$th lowest cell in column $c'$ lies weakly below row $c'+k$ for all $c'$, and we may thus proceed as before. At each step, the recorded letter is at least as large as the row index of the removed cell, showing $\alpha(T)_i \leq \W(T)_i$. Moreover, if $\W(T)_{i+1} > \W(T)_{i}$, then the corresponding removed cells $x_{i+1},x_{i}$ must have $x_{i}$ weakly left of $x_{i+1}$. Since we remove the lowest then rightmost cell, this ensures $x_{i+1}$ lies strictly above $x_i$, and so $\alpha(T)_{i+1} > \alpha(T)_i$. Thus $\alpha(T)$ is $\W(T)$-compatible.
\end{proof}

The maps $\D$ and $\T$ are clearly inverse to one another, and so Theorems~\ref{thm:forward} and \ref{thm:backward} give a bijective proof of Kohnert's rule for Schubert polynomials.

\vspace{\baselineskip}

\begin{center}
{\sc Acknowledgments}
\end{center}

Special thanks to Adelie Borman and the Venice Math Circle for verifying the simplicity of and computing examples for the bijections described in this paper. Thanks as well to Peter Kagey for helpful suggestions that clarified exposition and to Shiliang Gao for careful reading that caught misstatements in earlier drafts.

%
%

\bibliographystyle{plain} 
\bibliography{kohnertsrule}

\begin{thebibliography}{1}

\bibitem{Ass19-inv}
Sami Assaf.
\newblock An inversion statistic for reduced words.
\newblock {\em Adv. in Appl. Math.}, 107:1--21, 2019.

\bibitem{Ass-KC}
Sami Assaf.
\newblock {D}emazure crystals for {K}ohnert polynomials.
\newblock arXiv:2002.04141, 2020.

\bibitem{BJS93}
Sara~C. Billey, William Jockusch, and Richard~P. Stanley.
\newblock Some combinatorial properties of {S}chubert polynomials.
\newblock {\em J. Algebraic Combin.}, 2(4):345--374, 1993.

\bibitem{Koh91}
Axel Kohnert.
\newblock Weintrauben, {P}olynome, {T}ableaux.
\newblock {\em Bayreuth. Math. Schr.}, (38):1--97, 1991.
\newblock Dissertation, Universit{\"a}t Bayreuth, Bayreuth, 1990.

\bibitem{LS82}
Alain Lascoux and Marcel-Paul Sch{\"u}tzenberger.
\newblock Polyn\^omes de {S}chubert.
\newblock {\em C. R. Acad. Sci. Paris S\'er. I Math.}, 294(13):447--450, 1982.

\bibitem{Win99}
Rudolf Winkel.
\newblock Diagram rules for the generation of {S}chubert polynomials.
\newblock {\em J. Combin. Theory Ser. A}, 86(1):14--48, 1999.

\bibitem{Win02}
Rudolf Winkel.
\newblock A derivation of {K}ohnert's algorithm from {M}onk's rule.
\newblock {\em S\'em. Lothar. Combin.}, 48:Art.\ B48f, 14, 2002.

\end{thebibliography}

\end{document}